\newtheorem{theorem}{Theorem}[section]
\newtheorem{lemma}[theorem]{Lemma}
\newtheorem{conjecture}[theorem]{Conjecture}
\crefname{section}{Section}{Sections}
\crefname{lemma}{Lemma}{Lemmas}
\crefname{theorem}{Theorem}{Theorems}
\theoremstyle{definition}
\newtheorem{defn}[theorem]{Definition}
\newtheorem{example}[theorem]{Example}
\theoremstyle{remark}
\newtheorem{remark}[theorem]{Remark}
\numberwithin{equation}{section}
\DeclareMathOperator{\Skip}{skip}
\DeclareMathOperator{\Free}{free}
\DeclareMathOperator{\qnona}{quinv-non-attacking}
\DeclareMathOperator{\isort}{inv-sorted}
\DeclareMathOperator{\qsort}{quinv-sorted}
\DeclareMathOperator{\perm}{perm}
\DeclareMathOperator{\proj}{\mathcal{P}}
\DeclareMathOperator{\MLQ}{MLQ}
\DeclareMathOperator{\dg}{dg}
\newcommand{\T}{\mathcal{T}}
\newcommand{\Z}{\mathbb{Z}}
\DeclareMathOperator{\rarm}{\widehat{arm}}
\DeclareMathOperator{\nontriv}{non-trivial}
\DeclareMathOperator{\triv}{trivial}
\DeclareMathOperator{\HH}{\widetilde{H}}
\DeclareMathOperator{\ASEP}{\mathcal{A}}
\DeclareMathOperator{\coquinv}{coquinv}
\DeclareMathOperator{\leg}{leg}
\DeclareMathOperator{\maj}{maj}
\DeclareMathOperator{\inv}{inv}
\DeclareMathOperator{\inc}{inc}
\DeclareMathOperator{\B}{\mathcal{B}}
\DeclareMathOperator{\Des}{Des}
\DeclareMathOperator{\wt}{wt}
\DeclareMathOperator{\quinv}{quinv}
\DeclareMathOperator{\TAZRP}{\mathcal{T}}
\DeclareMathOperator{\dep}{dep}
\DeclareMathOperator{\Tab}{Tab}
\DeclareMathOperator{\South}{South}
\newcommand{\qbinom}[2]{\bgroup\renewcommand*{\arraystretch}{1}\begin{bmatrix} #1 \\ #2\end{bmatrix} \egroup}
\newlength\cellsize \setlength\cellsize{12\unitlength}
\newcommand\cellify[1]{\def\thearg{#1}\def\nothing{}%
\ifx\thearg\nothing
\vrule width0pt height\cellsize depth0pt\else
\hbox to 0pt{\usebox2\hss}\fi%
\vbox to 12\unitlength{
\vss
\hbox to 12\unitlength{\hss$#1$\hss}
\vss}}
\newcommand\tableau[1]{\vtop{\let\\=\cr
\setlength\baselineskip{-16000pt}
\setlength\lineskiplimit{16000pt}
\setlength\lineskip{0pt}
\halign{&\cellify{##}\cr#1\crcr}}}
\newcommand\expath[1]{%
\hbox to 0pt{\usebox3\hss}%
\vbox to 12\unitlength{
\vss
\hbox to 12\unitlength{\hss$#1$\hss}
\vss}}
\newcommand\cell[3]{
\def\i{#1} \def\j{#2} \def\entry{#3}

\draw (\j-1,-\i)--(\j,-\i)--(\j,-\i+1)--(\j-1,-\i+1)--(\j-1,-\i);
\node at (\j-.5,-\i+.5) {\entry};
}
\newcommand{\qtrip}[3]{
\begin{tikzpicture}[scale=0.5]
\cell{1}{0}{#1} \cell{2}{0}{#2} \cell{2}{2.7}{#3}
\node at (1,-1.5) {$\cdots$};
\end{tikzpicture}
}
\begin{document}

\title[Macdonald polynomials and one-dimensional particle processes]{New formulas for Macdonald polynomials via the multispecies exclusion and zero range processes}


\author{Olya Mandelshtam}
\address{University of Waterloo, Waterloo, Ontario, Canada}
\curraddr{}
\email{omandels@uwaterloo.ca}
\thanks{The author was partially supported by NSERC grant RGPIN-2021- 02568 and NSF grant DMS-1953891.}

\subjclass[2020]{Primary 05E05; Secondary 60G10}

\date{May 24, 2024}

\begin{abstract}
We describe some recently discovered connections between one-dimensional interacting particle models and Macdonald polynomials. The first such model is the \emph{multispecies asymmetric simple exclusion process (ASEP)} on a ring, linked to the symmetric Macdonald polynomial $P_{\lambda}(X;q,t)$ through its partition function. Through this connection, a new formula was found for $P_{\lambda}$ by generalizing \emph{multiline queues}, which were introduced by Martin in 2018 to compute stationary probabilities of the ASEP. The second particle model is the \emph{multispecies totally asymmetric zero range process (TAZRP)} on a ring, which was very recently found to have an analogous connection to the modified Macdonald polynomial $\widetilde{H}_{\lambda}(X;q,t)$ through its partition function. This discovery coincided with a new formula for $\widetilde{H}_{\lambda}$, this time in terms of tableaux with a \emph{queue inversion} statistic, which also compute stationary probabilities of the TAZRP. We explain the plethystic relationship between multiline queues and queue inversion tableaux, and along the way, derive a new formula for $P_{\lambda}$ using the queue inversion statistic. This plethystic correspondence is closely related to fusion in the setting of integrable systems.
\end{abstract}

\maketitle

\section{Introduction}

The goal of this article is to describe some recent developments that explore the relationships between interacting particle models and Macdonald polynomials. Over the last couple of decades, a growing body of research has been devoted to studying these relationships, revealing remarkable connections between particle systems and symmetric functions. Much of this research has been focused on discovering new combinatorial objects and positive formulas with the goal of gaining a deeper understanding of the two areas. This article will focus on the role of two particle models which have been found to play a significant role in recent years: the \emph{asymmetric simple exclusion process} (ASEP) and the \emph{zero range process} (ZRP).

 The \emph{symmetric Macdonald polynomials} $P_{\lambda}(X;q,t)$, introduced by Macdonald \cite{Mac88}, are a multivariate family of orthogonal polynomials in the variables $X=\{x_1,x_2,\ldots\}$, characterized by certain triangularity and orthogonality axioms. They are indexed by partitions, and their coefficients are rational functions in the parameters $q$ and $t$. They notably generalize the Schur functions, Hall--Littlewood polynomials, Jack polynomials, and $q$-Whittaker functions. The \emph{modified Macdonald polynomials} $\HH_{\lambda}(X;q,t)$ are a transformed version of the $P_{\lambda}$'s, obtained through the formal operation of plethysm applied to a scalar multiple of the $P_{\lambda}$'s. The $\HH_{\lambda}$'s famously enjoy many ``nice'' properties such as being Schur positive, and appear in a variety of formulas with positive expansions in various bases. There has been much attention devoted to combinatorial study of the $P_{\lambda}$'s, the $\HH_{\lambda}$'s, and related functions, including the discovery of various combinatorial objects that lead to positive formulas for these polynomials.
 
 The initiation of the study of 1D non-equilibrium exactly solvable particle models is historically attributed to Spitzer in \cite{spitzer-1970}, where he introduced both the ASEP and the ZRP. In general, exclusion processes are a class of continuous time interacting particle models on a 1D lattice, such that there is at most one particle per site (hence, exclusion) -- with the ASEP as the most famous model in this class. On the other hand, zero range processes are in a sense defined to have the opposite property: any site can contain any number of particles. For both models, transitions are defined by particles hopping between adjacent sites. We consider \emph{multispecies} versions of both types of models, by assigning labels in $\mathbb{N}$ to the particles, so that particles with higher labels are considered stronger, and particles of the same label are indistinguishable. See \cref{sec:ASEP} and \cref{sec:TAZRP} for the complete definitions.
 
The ASEP enjoys rich combinatorial structure, which in many cases has been exploited to obtain combinatorial formulas for its exact stationary distributions. Another enticing aspect for algebraic combinatorics has been its intriguing connection to Macdonald polynomials. The first such link was found for the ASEP with open boundaries, whose partition function was discovered to be closely linked to Askey--Wilson polynomials in the single-species case \cite{sasamoto-1999,USW} and Koornwinder polynomials (Macdonald polynomials of type BC) in the multispecies case \cite{CW18,cantini-2017,cantini-etal-2016}. Remarkably, a similar connection was also discovered for classical (type A) Macdonald polynomials in \cite{CGW-2015}, who found that the partition function of the multispecies ASEP on a circular lattice is a specialization of $P_{\lambda}(X;q,t)$ at $x_1=\cdots=x_n=q=1$, with stationary probabilities appearing as certain nonsymmetric Macdonald polynomials. This connection was made explicit through two different approaches: via \emph{multiline queues}, which are probabilistic objects that project to the dynamics of the ASEP,  in \cite{CMW18}, and via vertex models, which are a related object coming from Yang--Baxter integrability,  in \cite{borodin-wheeler-2019}. The latter was generalized in several works (see \cite{ABW21} and the references therein) to colored vertex models to obtain a number of analogous formulas for various families of symmetric and nonsymmetric functions related to Macdonald polynomials, including the modified Macdonald polynomial $\HH_{\lambda}$ in \cite{garbali-wheeler-2020}. Independently, a new multiline-queue-esque formula for $\HH_{\lambda}$ was found in \cite{AMM20}, using a \emph{queue inversion} statistic on tableaux.  It was then a natural question to find a particle process analogous to the ASEP whose partition function corresponds to $\HH_{\lambda}$. In \cite{AMM22}, such a process was found to be the multispecies totally asymmetric zero range process on a circle (mTAZRP). 

The purpose of the present article is to tie together the ``$P_{\lambda}$ triangle'' and the ``$\HH_{\lambda}$ triangle'' of \cref{fig:triangles}. In \cref{sec:MLQ}, we describe the multiline queue formula that connects the multispecies ASEP on a circle to $P_{\lambda}$. In \cref{sec:mTAZRP tableaux} we describe the mTAZRP and a tableau formula that illustrates the connection to $\HH_{\lambda}$. In \cref{sec:plethysm} we give a tableaux characterization of the multiline queue formula and show how the the two formulas are tied together through plethysm. This analysis brings us to a new formula and a new conjectural compact formula for $P_{\lambda}(X;q,t)$, in terms of the queue inversion statistic. 

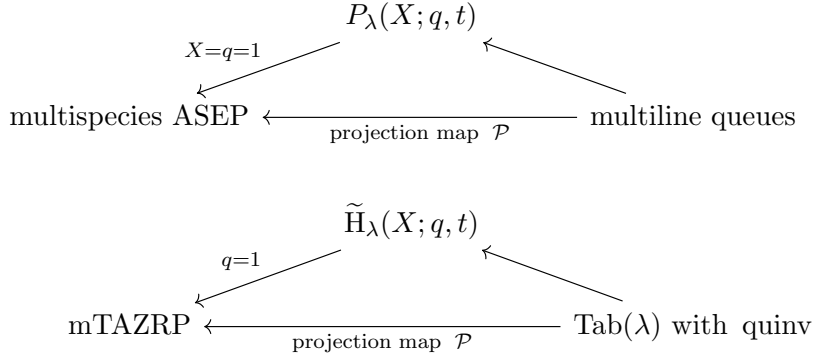
\begin{figure}[H]
\centering
\begin{tikzcd}
 & P_{\lambda}(X;q,t) \arrow[dl,swap,"X=q=1"]  \\
\text{multispecies ASEP} && \text{multiline queues} \arrow[ul] \arrow[ll,"\text{projection map}\ \proj"] \\
\vspace{0.2in}
 & \HH_{\lambda}(X;q,t) \arrow[dl,swap,"q=1"] \\
\text{mTAZRP} &&  \Tab(\lambda)\ \text{with}\ \quinv \arrow[ul] \arrow[ll,"\text{projection map}\ \proj"] 
\end{tikzcd}
\vspace{0.2in}
\caption{In the ``$P_{\lambda}$ triangle'', multiline queues are the combinatorial object interpolating between probabilities of the ASEP on a circle and the classical Macdonald polynomials. In the ``$\HH_{\lambda}$ triangle'', fillings of tableaux $\Tab(\lambda)$ with the \emph{queue inversion} statistic are the combinatorial object interpolating between the modified Macdonald polynomials and the multispecies TAZRP on a circle.}\label{fig:triangles}
\end{figure}

 \section{Notation and definitions}

\subsection{Combinatorial statistics and tableaux}\label{sec:stats}
A \emph{composition} is a sequence $\alpha=(\alpha_1,\dots, \alpha_k)$, of $k$ non negative integers, which are called its \emph{parts}. If some parts are 0, it is called a \emph{weak composition}. The number of nonzero parts is given by $\ell(\alpha)$, and $|\alpha|=\sum_i\alpha_i$ is the sum of the parts. Denote by $\lambda(\alpha)$ and $\inc(\alpha)$ the compositions obtained by rearranging the parts of $\alpha$ in weakly decreasing and increasing order, respectively.

If $\lambda$ is a weakly decreasing composition, we call it a partition, and we denote it as $\lambda\vdash n$ where $n:=|\lambda|$. We may equivalently use the \emph{frequency notation} to describe a partition $\lambda$ by giving the multiplicity of its parts. Let $m_i(\lambda)$ be the number of parts of size $i$ in $\lambda$; then we may write $\lambda$ as $\langle 1^{m_1(\lambda)}2^{m_2(\lambda)}\ldots\rangle$. If each part of $\lambda$ appears at most once, we call $\lambda$ a \emph{strict} partition.

Given a partition $\lambda=(\lambda_1,\dots, \lambda_n)$, its \emph{diagram} $\dg(\lambda)$ is a configuration of bottom justified columns with $\lambda_i$ cells in the $i$'th column from left to right. Make note that our definition of $\dg(\lambda)$ corresponds to the Ferrers graph of the conjugate partition $\lambda'$.
  
The columns of $\dg(\lambda)$ are labeled from left to right, and the rows from bottom to top. The notation $u=(r,j)$ refers to the cell in row $r$ and column $j$ (again, departing from the convention of Cartesian coordinates). We denote by $\South(u)=\South(r,j)$ the cell $(r-1,j)$ directly below $u$, if it exists; if $r=1$ and $\South(u)$ doesn't exist, we set the convention $\South(u)=\infty$. 

For example, $\lambda=(3, 3, 3, 2, 2, 1)$ may be written as $\langle 1^12^23^3\rangle$.  For $\alpha = (0,2,0,2,3,3,1, 3)$ then  $\inc(\alpha) = (0,0,1,2,2,3,3,3)$, $\lambda(\alpha) = \lambda$, and $\ell (\alpha) = 6$.  See \cref{fig:leg} for the diagram of $\lambda$.

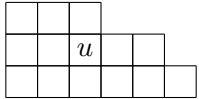
\begin{figure}[h]
  \centering
\begin{tikzpicture}[scale=.5]
\node at (0,0) {\tableau{\ &\ &\ \\\ &\ &u&\ &\ \\\ &\ &\ &\ &\ &\ }};

\end{tikzpicture} 
\caption{We show $\dg(\lambda)$ for $\lambda=\langle 1^1 2^2 3^3\rangle$. The cell $u=(2,3)$ has $\leg(u)=1$ and $\rarm(u)=3$ (from\cref{def:rarm}).}
\label{fig:leg}
\end{figure}

The \emph{leg} of a cell $(r,j)\in\dg(\lambda)$, denoted by $\leg(u)=\leg(r,j)$, is equal to $\lambda_j-r$: the number of cells above in the same column as $u$.

A \emph{filling} $\sigma:\dg(\lambda) \to \Z_+$ is an assignment of positive integers to the cells of $\dg(\lambda)$. For a cell $u\in\dg(\lambda)$, $\sigma(u)$ denotes the entry in the cell $u$. The content of a filling $\sigma$ is recorded as a monomial in the variables $X=x_1,x_2,\ldots$, and is denoted by $x^{\sigma} := \prod_{u\in \dg(\lambda)} x_{\sigma(u)}$. 

Define the \emph{set of descents} of a filling $\sigma$ of $\dg(\lambda)$ to be 
\[
	\Des(\sigma) = \{u\in \dg(\lambda)\ :\ \sigma(u)>\sigma(\South(u))\}.
\]   
The \emph{major index} $\maj(\sigma)$ is then defined as:
\[
\maj(\sigma) = \sum_{s \in \Des(\sigma)} (\leg(s)+1).
\]

We also define the notion of attacking cells and non-attacking fillings in our setting. (Our definition is the reverse of that of \cite{HHL05}, so as to be compatible with the queue inversion statistic of \cref{def:quinv}.) 
\begin{defn}\label{def:attacking}
Two cells $x=(r_1,i)$ and $y=(r_2,j)$ with $i<j$ are considered a \emph{quinv-attacking pair} if $r_1=r_2$ or $r_2=r_1-1$. That is, the cells are in one of the two configurations:
\begin{center}
\begin{tikzpicture}[scale=.5]
\node at (0,0) {\tableau{x&&&y}};
\node at (.2,0) {$\cdots$};
\node at (4,0) {or};
\node at (8,0) {\tableau{x\\&&&y}};
\node at (8,0) {$\ddots$};
\end{tikzpicture}
\end{center}
A filling $\sigma$ is called \emph{quinv-non-attacking} (or simply non-attacking if the context is clear) if $\sigma(x)\neq \sigma(y)$ for any quinv-attacking pair of cells $x$ and $y$.
\end{defn}

 \subsection{Multispecies ASEP}\label{sec:ASEP}
 The ASEP was originally introduced as a model for protein synthesis \cite{MGP68} and soon after was brought into the mathematical physics sphere by Spitzer in the late 1960s \cite{spitzer-1970} as a canonical example of an exactly solvable non-equilibrium process that exhibits boundary-induced phase transitions. Since the initiation of the field, the ASEP has been widely studied by physicists and mathematicians, and has since found connections and applications to many areas including random matrix theory, total positivity on the Grassmanian, formation of shocks, molecular transport, and others. 

The ASEP has a number of interesting variations, many of which are exactly solvable; the version we shall focus on in this article is a continuous time multispecies ASEP on a circular lattice with a fixed number of sites (i.e.~with periodic boundary conditions) and a hopping parameter $0 \leq t\leq 1$. In this model, particles are labeled by positive integers where the labels represent different "species"; our convention is that particles with higher labels are considered stronger. Moreover, for ease of notation, we shall consider vacancies as particles with the smallest label $0$. As the boundary is closed, particles are conserved, and so the state space is determined by a positive integer $n$ and a partition $\lambda=(\lambda_1,\ldots,\lambda_n)$, so that the $j$'th part of $\lambda$ corresponds to a particle of species $\lambda_j$, for each $j$. The state space of this process is enumerated by permutations of the parts of $\lambda$, and is denoted by $\mathcal{A}(\lambda,n)$. See \cref{fig:ASEP} for an example of a state of the ASEP on $n=7$ sites with particles of types $3,2,2,1$. 

The multispecies ASEP is a continuous time Markov chain on $\mathcal{A}(\lambda,n)$ with the following dynamics. For any pair of adjacent sites $j,j+1$ (considered cyclically modulo $n$, so that sites $n$ and $1$ are adjacent) their respective contents $a,b$ can swap to become $b,a$, such that the rate of swapping when $a>b$ is $t$ times the rate when $a<b$. (Particles with the same label are indistinguishable, so if $a=b$, no transition occurs.)

\begin{example}
For example, $\mathcal{A}((3,3,1),4)$ consists of the following 12 states: 
\begin{align*}(3,3,1,0), (3,1,3,0), (1,3,3,0), (3,3,0,1), (3,1,0,3), (1,3,0,3),\\
 (3,0,3,1), (3,0,1,3), (1,0,3,3), (0,3,3,1), (0,3,1,3), (0,1,3,3).\end{align*}
Examples of some transitions are:
\begin{itemize}
\item The jumps from $(3,1,3,0)$ are to $(1,3,3,0)$ and $(3,1,0,3)$ with rate $t$ and to $(0,1,3,3)$ and $(3,3,1,0)$ with rate $1$.
\item The jumps from $(0,3,3,1)$ are to $(1,3,3,0)$ with rate $t$ and to $(3,0,3,1)$ and $(0,3,1,3)$ with rate $1$.
\end{itemize}
\end{example}

\begin{remark} Although we describe the multispecies ASEP as a continuous time process, it it also natural to consider it in discrete time: at each time step, select a site with probability $1/n$, and then let the hopping rate $t$ dictate whether or not a hop occurs from that site. The discrete and continuous time processes will have the same stationary distribution.
\end{remark}

\begin{figure}[h!]
\centering
\begin{tikzpicture}[
  <-,   
  thick,
  main node/.style={circle, white,fill=none, draw},
]
  \newcommand*{\MainNum}{7}
  \newcommand*{\MainRadius}{1.5cm} 
  \newcommand*{\MainStartAngle}{90}

  \path
    (0, 0) coordinate (M)
    \foreach \t [count=\i] in {{\ \textcolor{black}{2}\ }, {\ \textcolor{black}{0}\ }, {\ \textcolor{black}{0}\ }, {\ \textcolor{black}{3}\ }, {\ \textcolor{black}{1}\ }, {\ \textcolor{black}{0}\ }, {\ \textcolor{black}{2}\ }} {
      +({\i-1)*360/\MainNum + \MainStartAngle}:\MainRadius)
      node[main node, align=center] (p\i) {\t}
    }
  ;  

  \foreach \i in {1, ..., \MainNum} {
    \pgfextracty{\dimen0 }{\pgfpointanchor{p\i}{north}} 
    \pgfextracty{\dimen2 }{\pgfpointanchor{p\i}{center}}
    \dimen0=\dimexpr\dimen2 - \dimen0\relax 
    \ifdim\dimen0<0pt \dimen0 = -\dimen0 \fi
    \pgfmathparse{2*asin(\the\dimen0/\MainRadius/2)}
    \global\expandafter\let\csname p\i-angle\endcsname\pgfmathresult
  }

  \foreach \i [evaluate=\i as \nexti using {int(mod(\i, \MainNum)+1}]
  in {1, ..., \MainNum} {  
    \pgfmathsetmacro\StartAngle{   
      (\i-1)*360/\MainNum + \MainStartAngle
      + \csname p\i-angle\endcsname
    }
    \pgfmathsetmacro\EndAngle{
      (\nexti-1)*360/\MainNum + \MainStartAngle
      - \csname p\nexti-angle\endcsname
    }
    \ifdim\EndAngle pt < \StartAngle pt
      \pgfmathsetmacro\EndAngle{\EndAngle + 360}
    \fi
    \draw
      (M) ++(\EndAngle:\MainRadius)
      arc[start angle=\EndAngle, end angle=\StartAngle, radius=\MainRadius]
    ;
  }
\end{tikzpicture}
\qquad\qquad\qquad
\begin{tikzpicture}[
  ->,   
  thick,
  main node/.style={circle, white,fill=none, draw},
]
  \newcommand*{\MainNum}{5}
  \newcommand*{\MainRadius}{1.5cm} 
  \newcommand*{\MainStartAngle}{90}

  \path
    (0, 0) coordinate (M)
    \foreach \t [count=\i] in {{\ \ \textcolor{black}{$\emptyset$} \ \ \ }, {\textcolor{black}{3,1,1}}, {\ \ \textcolor{black}{$\emptyset$}\ \ }, {\ \textcolor{black}{4,2,2}\ }, {\textcolor{black}{3,2,1}}} {
      +({\i-1)*360/\MainNum + \MainStartAngle}:\MainRadius)
      node[main node, align=center] (p\i) {\t}
    }
  ;  

   \foreach \i in {1, ..., \MainNum} {
    \pgfextracty{\dimen0 }{\pgfpointanchor{p\i}{north}} 
    \pgfextracty{\dimen2 }{\pgfpointanchor{p\i}{center}}
    \dimen0=\dimexpr\dimen2 - \dimen0\relax 
    \ifdim\dimen0<0pt \dimen0 = -\dimen0 \fi
    \pgfmathparse{2*asin(\the\dimen0/\MainRadius/2)}
    \global\expandafter\let\csname p\i-angle\endcsname\pgfmathresult
  }

  \foreach \i [evaluate=\i as \nexti using {int(mod(\i, \MainNum)+1}]
  in {1, ..., \MainNum} {  
    \pgfmathsetmacro\StartAngle{   
      (\i-1)*360/\MainNum + \MainStartAngle
      + \csname p\i-angle\endcsname
    }
    \pgfmathsetmacro\EndAngle{
      (\nexti-1)*360/\MainNum + \MainStartAngle
      - \csname p\nexti-angle\endcsname
    }
    \ifdim\EndAngle pt < \StartAngle pt
      \pgfmathsetmacro\EndAngle{\EndAngle + 360}
    \fi
    \draw
      (M) ++(\EndAngle:\MainRadius)
      arc[start angle=\EndAngle, end angle=\StartAngle, radius=\MainRadius]
    ;
  }
\end{tikzpicture}
\caption{On the left is a state of the ASEP of type $\lambda=\langle 1^1 2^2 3^1 \rangle$ on $n=7$ sites. Sites are labeled 1 through $n$ clockwise, starting from the topmost site. The given state is represented by the composition $(2,2,0,0,3,1,0)\in\ASEP(\lambda,n)$, with arrows showing the preferred hopping direction of the particles. On the right is a state of the TAZRP of type $\langle 1^3 2^3 3^2 4^1\rangle$ on 5 sites. The multiset composition representing this state is $(\,\cdot\,|\,321\,|\,422\,|\,\cdot\,|\,311\,)$, with arrows showing the clockwise direction of hopping in the TAZRP.}\label{fig:ASEP}
\end{figure}
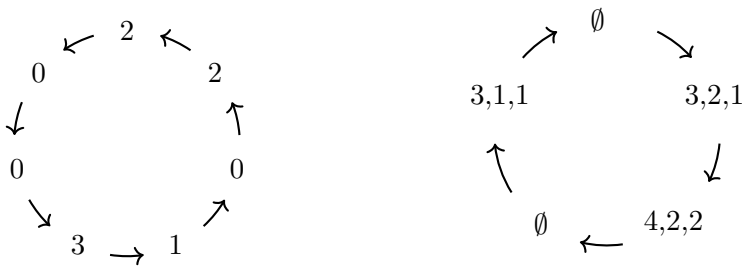

\subsection{Multispecies TAZRP}\label{sec:TAZRP}

The zero range process (ZRP) is a typical example of a 1D non-equilibrium process that exhibits condensation transitions. A ZRP can generally be defined on any graph (with possibly some additional data), where every site may contain an arbitrary number of particles. Particles hop from one site to the other in continuous time with a rate that depends solely on the content of the origin site, whence the name `zero range'. A key property of this process, and one of the many reasons the ZRP is of great interest to physicists, is that the stationary distribution is a product measure; see~\cite{EvansHanney} for a review. 

Like the ASEP, physicists and mathematicians have studied a range of different versions of the ZRP; the one we consider in this paper is called the \emph{multispecies totally asymmetric zero-range process} (mTAZRP), which had been introduced in \cite{takeyama-2015}, and is a specialization of a more general multispecies ZRP that is described in \cite{kuniba-okado-watanabe-2017}. 

For positive integer $n$ and a partition $\lambda=(\lambda_1,\ldots,\lambda_k)$, define $\T(\lambda,n)$ to be the set of arrangements of the $k$ particles labeled by the parts of $\lambda$ amongst the $n$ sites. $\T(\lambda,n)$ can be described as the set of multiset compositions of length $n$ that rearrange to $\lambda$: a state $w=(w_1,\ldots,w_n)\in\T(\lambda,n)$ is a sequence of (possibly empty) multisets such that the union of all the parts $\biguplus_{j=1}^n w_j$ is equal to the multiset $\lambda$.

The mTAZRP is a continuous-time Markov process on $\T(\lambda,n)$ with a global parameter $t\geq 0$ and site-dependent parameters $x_1,\ldots,x_n$. Each particle is equipped with an exponential clock, such that when a clock goes off, a particle jumps to an adjacent site (a particle at site $j$ jumps to site $j+1$ cyclically, so that a particle jumping out of site $n$ enters site $1$). Define the transition rates as follows. For each $j\in\{1,\dots,n\}$ and each species $r\geq 1$, if site $j$ contains $c_r$ particles of type $r$ and $d_r$ particles of type larger than $r$, then the total rate of jumps of particles of type $r$ from site $j$ is $x_j^{-1} t^{d_r}\sum_{i=0}^{c_r-1} t^{i}$. 

\begin{example}
$\T((3,1,1),3)$ consists of the following
$18$ states: 
\begin{align*}
(311|\cdot|\cdot), (31|1|\cdot), (31|\cdot|1), (3|11|\cdot), (3|1|1), (3|\cdot|11), (11|3|\cdot), (11|\cdot|3), (1|31|\cdot),\\ 
(1|1|3), (1|3|1), (1|\cdot|31), (\cdot|311|\cdot), (\cdot|31|1), (\cdot|3|11), (\cdot|11|3), (\cdot|1|31), (\cdot|\cdot|311).
\end{align*}
Examples of transitions of the mTAZRP on $\T((3,1,1),3)$ are: 
\begin{itemize}[itemsep=0pt]
\item The jumps from $(\cdot|311|\cdot)$ are to $(\cdot|11|3)$ with rate $x_2^{-1}$, and
to $(\cdot|31|1)$ with rate $x_2^{-1}(t+t^2)$;
\item The jumps from $(\cdot|1|31)$ are to $(3| 1| 1)$ with rate $x_3^{-1}$, 
to $(1| 1|3)$ with rate $x_3^{-1}t$, and to $(\cdot|\cdot|311)$ with rate $x_2^{-1}$. 
\end{itemize}
\end{example}

\section{The ASEP and multiline queues}\label{sec:MLQ}

The concept of using multiline queues as a tool to study the ASEP was famously introduced by Ferrari and Martin in \cite{FM07} to compute probabilities for the \emph{totally} asymmetric simple exclusion process (TASEP), which is the $t=0$ case. More recently, Martin generalized the result for the general $t$ setting in \cite{martin-2020}. Corteel, Williams, and the author introduced a modification of Martin's multiline queues in \cite{CMW18}, also computing probabilities for the ASEP. In this section, we focus on the multiline queues defined in \cite{CMW18}. 

\begin{defn}[Ball system]\label{def:MLQ} Fix a positive integer $n$ and a partition $\lambda$ with $\ell(\lambda)\leq n$. A \emph{ball system} $B$ of size $\lambda,n$ is an $L \times n$, where $L:=\lambda_1$, with rows labeled 1 through $L$ from bottom to top and columns labeled $1$ through $n$ from left to right. The array is a cylinder, so that column $n$ is adjacent to column 1. Each of the $Ln$ positions in the array is either empty or occupied by a ball, such that row $j$ contains exactly $\lambda_j'$ balls. We can represent a ball system by an $L$-tuple of subsets of $n$ where the $j$'th subset corresponds to the set of columns that contain balls in row $j$. Denote the set of ball systems of size $\lambda,n$ by $\B(\lambda,n)$. Then
\[
\B(\lambda,n)=\{(b_1,\ldots,b_L):  b_j\subseteq \{1,\ldots,n\}\ \mbox{and}\ |b_j|=\lambda_j'\ \mbox{for}\ 1 \leq j \leq L\}.
\]
\end{defn}
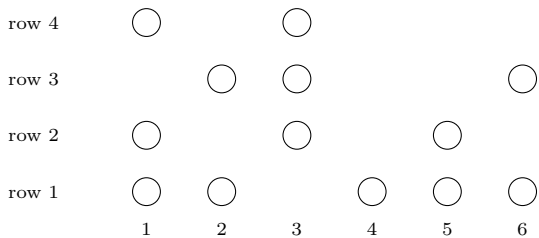
\begin{figure}[!ht]
\centering
\begin{tikzpicture}[scale=0.5]
\def \h {1.5}
\def \w {2}
\def \r {.37cm}

\draw (1*\w,3*\h) circle (\r) node[color=black] {$\ $};
\draw (3*\w,3*\h) circle (\r) node[color=black] {$\ $};

\draw (2*\w,2*\h) circle (\r) node[color=black] {$\ $};
\draw (3*\w, 2*\h) circle (\r) node[color=black] {$\ $};
\draw (6*\w,2*\h) circle (\r) node[color=black] {$\ $};

\draw (1*\w,\h) circle (\r) node[color=black] {$\ $};
\draw (3*\w,\h) circle (\r) node[color=black] {$\ $};
\draw (5*\w,\h) circle (\r) node[color=black] {$\ $};

\draw (1*\w,0) circle (\r) node[color=black] {$\ $};
\draw (2*\w,0) circle (\r) node[color=black] {$\ $};
\draw (4*\w,0) circle (\r) node[color=black] {$\ $};
\draw (5*\w,0) circle (\r) node[color=black] {$\ $};
\draw (6*\w,0) circle (\r) node[color=black] {$\ $};

\node at (-1,3*\h) {\tiny row $4$};
\node at (-1,2*\h) {\tiny row $3$};
\node at (-1,\h) {\tiny row $2$};
\node at (-1,0) {\tiny row $1$};
\node at (\w,-1) {\tiny $1$};
\node at (2*\w,-1) {\tiny $2$};
\node at (3*\w,-1) {\tiny $3$};
\node at (4*\w,-1) {\tiny $4$};
\node at (5*\w,-1) {\tiny $5$};
\node at (6*\w,-1) {\tiny $6$};

\end{tikzpicture}
 \caption{
A ball system of type $\lambda=(4,4,3,1,1)$ and $n=6$, represented as the tuple $B=(\{1,2,4,5,6\},\{1,3,5\},\{2,3,6\},\{1,2\})$.}
\label{parameters2}
 \end{figure}
 
\begin{defn}[Pairing]
Let $B\in\B(\lambda,n)$ be a ball system. A \emph{pairing} of $B$ is an algorithm by which balls are labeled and connected by strands from higher rows to lower rows according to the following rules.
\begin{itemize}
\item Every ball in a row $r>1$ of $B$ is matched to a unique ball in row $r-1$. Two balls are connected by the shortest strand (looking weakly to the right and wrapping if necessary around the cylinder) to connect the two balls. We abuse the definition slightly by also calling every such strand a ``pairing''.
    \item The label of each ball is equal to the length of the strand containing it. (In other words, a strand originating at row $r$ has all balls contained in it labeled "$r$".) 
    \item If a ball with label $a$ is directly above a ball with label $a$ in the same column, then necessarily, the two balls are connected by a strand. Such pairings are called \emph{trivial pairings}.
    \item If a ball with label $a$ is directly above a ball with label $b$ in the same column, then $a<b$.
\end{itemize}
If a strand is pairing balls from row $r$ to row $r-1$, the ball in row $r$ is called the \emph{departure site} and the ball in row $r-1$ is called the \emph{arrival site}.
\end{defn}

\begin{remark}
The above definition of a pairing, corresponding to the multiline queues defined in \cite{CMW18}, is a variation on the multiline queues introduced by Martin in \cite{martin-2020}: in Martin's definition, the third bullet point is missing. Our definition results in a slightly different (and generally smaller) set of multiline queues corresponding to any given size. We revisit this difference in the discussion of \cref{sec:comparison}.
\end{remark}

See \cref{parameters2} for an example of a ball system of size $(4,4,3,1,1)$ and $n=6$, and \cref{fig:MLQexample} for an example of a pairing on that ball system.

Note that a pairing uniquely determines the labeling, but not vice versa-- it is possible to have two different pairings corresponding to a single labeling of a ball system, for example:
\[
\begin{tikzpicture}[scale=0.5]
\def \h {2}
\def \w {1.5}
\def \r {.43cm}
\def \e {9}

\draw (0*\w,1*\h) circle (\r) node[color=black] {$\ell$};
\draw (1*\w,1*\h) circle (\r) node[color=black] {$\ell$};
\draw[thick,->,black] (0*\w, 1*\h-.4) to[out=-45,in=120] (2*\w, 0*\h+.4);
\draw[thick,->,black] (1*\w, 1*\h-.4) to[out=-45,in=120] (3*\w, 0*\h+.4);
\draw (2*\w,0*\h) circle (\r) node[color=black] {$\ell$};
\draw (3*\w,0*\h) circle (\r) node[color=black] {$\ell$};

\draw (\e+0*\w,1*\h) circle (\r) node[color=black] {$\ell$};
\draw (\e+1*\w,1*\h) circle (\r) node[color=black] {$\ell$};
\draw[thick,->,black] (\e+0*\w, 1*\h-.4) to[out=-45,in=120] (\e+3*\w, 0*\h+.4);
\draw[thick,->,black] (\e+1*\w, 1*\h-.4) to[out=-45,in=120] (\e+2*\w, 0*\h+.4);
\draw (\e+2*\w,0*\h) circle (\r) node[color=black] {$\ell$};
\draw (\e+3*\w,0*\h) circle (\r) node[color=black] {$\ell$};
 
\end{tikzpicture}
\]

\begin{defn}[Multiline queue]\label{def:MLQ}
A \emph{multiline queue} of size $\lambda,n$ is a pairing on a ball system in $\B(\lambda,n)$, and the set of them is denoted by $\MLQ(\lambda,n)$. The \emph{state} of a multiline queue is the sequence of integers $\proj(M)=(\alpha_1,\ldots,\alpha_n)$ read off the labels of the balls in the bottom row (from left to right), with the vacancies read as 0's. The state of a multiline queue can be thought of as the \emph{projection} of the multiline diagram onto its bottom row, and we denote this projection map by $\proj:\MLQ(\lambda,n)\rightarrow \ASEP(\lambda,n)$. Denote the set of multiline queues corresponding to state $\alpha=(\alpha_1,\ldots,\alpha_n)$ by $\MLQ(\alpha)$. (It is immediate from the definitions that if $M\in\MLQ(\lambda,n)$ and $\alpha=\proj(M)$, then $\lambda(\alpha)=\lambda$.)
\end{defn}

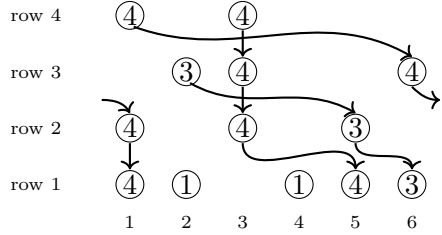
\begin{figure}[!ht]
\centering

\begin{tikzpicture}[scale=0.5]
\def \h {1.5}
\def \w {1.5}
\def \r {.37cm}

\draw (1*\w,3*\h) circle (\r) node[color=black] {$4$};
\draw (3*\w,3*\h) circle (\r) node[color=black] {$4$};

\draw[thick,->,black] (1*\w+.1, 3*\h-.4+.1) to[out=-20,in=150] (6*\w, 2*\h+.4);
\draw[thick,->,black] (3*\w, 3*\h-.4) to[out=-90,in=90] (3*\w, 2*\h+.4);

\draw (2*\w,2*\h) circle (\r) node[color=black] {$3$};
\draw (3*\w, 2*\h) circle (\r) node[color=black] {$4$};
\draw (6*\w,2*\h) circle (\r) node[color=black] {$4$};

\draw[thick,->,black] (2*\w+.1, 2*\h-.4+.1) to[out=-30,in=150] (5*\w, 1*\h+.4);
\draw[thick,->,black] (3*\w, 2*\h-.4) to[out=-90,in=90] (3*\w, 1*\h+.4);
\draw[thick,->,black] (6*\w, 2*\h-.4) to[out=-45,in=180] (6.5*\w, 1.5*\h);
\draw[thick,->,black] (0.5*\w, 1.5*\h) to[out=0,in=120] (1*\w, 1*\h+.4);

\draw (1*\w,\h) circle (\r) node[color=black] {$4$};
\draw (3*\w,\h) circle (\r) node[color=black] {$4$};
\draw (5*\w,\h) circle (\r) node[color=black] {$3$};

\draw[thick,->,black] (1*\w, 1*\h-.4) to[out=-90,in=90] (1*\w, 0*\h+.4);
\draw[thick,->,black] (3*\w, 1*\h-.4) to[out=-90,in=90] (5*\w, 0*\h+.4);
\draw[thick,->,black] (5*\w, 1*\h-.4) to[out=-90,in=90] (6*\w, 0*\h+.4);

\draw (1*\w,0) circle (\r) node[color=black] {$4$};
\draw (2*\w,0) circle (\r) node[color=black] {$1$};
\draw (4*\w,0) circle (\r) node[color=black] {$1$};
\draw (5*\w,0) circle (\r) node[color=black] {$4$};
\draw (6*\w,0) circle (\r) node[color=black] {$3$};

\node at (-1,3*\h) {\tiny row $4$};
\node at (-1,2*\h) {\tiny row $3$};
\node at (-1,\h) {\tiny row $2$};
\node at (-1,0) {\tiny row $1$};
\node at (\w,-1) {\tiny $1$};
\node at (2*\w,-1) {\tiny $2$};
\node at (3*\w,-1) {\tiny $3$};
\node at (4*\w,-1) {\tiny $4$};
\node at (5*\w,-1) {\tiny $5$};
\node at (6*\w,-1) {\tiny $6$};

\end{tikzpicture}
\caption{A multiline queue $M$ of size $\lambda=(4,4,3,1,1)$ with $n=6$, projecting to the state $\proj(M)=(4,1,0,1,4,3)\in\ASEP(\lambda,n)$.}
\label{fig:MLQexample}
 \end{figure}

We now define a weight $\wt(M)$ associated to each multiline queue $M$, as a function of the variables $X$ and parameters $q,t$. For each $M$, $\wt(M)$ is a monomial in $X$ multiplied by a rational function in $q,t$.

\begin{defn}\label{def:pairing}
Let $M$ be a multiline queue of size $(\lambda,n)$ corresponding to the ball system $B=(b_1,\ldots,b_L)$ where $L:=\lambda_1$. Define $x^M$ to be the content monomial $\prod_{r=1}^L \prod_{j\in b_r} x_j$. In other words, the power of $x_j$ is equal to the number of balls in column $j$ of $M$. 
\end{defn}

For example, the multiline queue in \cref{fig:MLQexample} has content $x^M=x_1^3x_2^2x_3^3x_4x_5^2x_6^2$.

We also define the \emph{$q,t$-weight} of $M$ by associating a weight to each pairing strand $p$. We shall consider the strands being created one at a time, pairing from the departure site to the arrival site. The pairing order must satisfy the following: 
\begin{itemize}
 \item strands are ordered by rows from top to bottom;
 \item within each row, strands are ordered from highest to lowest label; 
 \item within each row, for any given label, the trivial pairings must come first.
 \end{itemize}
Let $\pi$ be a fixed pairing order. For each ball $u$ in rows $2,\ldots,L$, define $d_{\pi}(u)$ to be the position in $\pi$ of the pairing with $u$ as its departure site. For each ball $u$ in rows $1,\ldots,L-1$ with label $\geq 2$, define $a_{\pi}(u)$ to be the position in $\pi$ of the pairing with $u$ as its arrival site. Call $\dep_{\pi}(M)$ the list of values of $d_{\pi}(u)$ ranging over all balls $u$ in rows $2,\ldots,L$, read row by row from top to bottom and from left to right within each row. For example, in \cref{fig:MLQexample}, there is only one possible ordering, given by $\dep_{\pi}(M)=(2,1,5,3,4,6,7,8)$.
 
We require that if the labelings of two multiline queues $M_1$ and $M_2$ coincide with respective pairing orders $\pi_1$ and $\pi_2$, then the orders of their departures are equal: $\dep_{\pi_1}(M_1)=\dep_{\pi_2}(M_2)$.

\begin{defn}\label{def:MLQ weight}
Let $M=(b_1,\ldots,b_L)$ be a multiline queue with pairing order $\pi$; $\wt_{\pi}(M)$ depends on this order. Let $p$ be a pairing of balls $u=(r,i)$ from row $r$ to $v=(r-1,j)$ with the label $\ell$. Define $\leg(p):=\ell-r$. If $p$ is a trivial pairing, it has weight 1. Otherwise, 
\[
\wt_{\pi}(p)(q,t)=\frac{t^{\Skip(p)}q^{\xi(p)(\leg(p)+1)}(1-t)}{1-t^{\Free(p)}q^{\leg(p)+1}},
\]
where $\xi(p)=1$ if $p$ wraps around the cylinder (i.e.~$i>j$) and $0$ otherwise. $\Skip(p)$ is the number of balls $x$ in row $r-1$ \emph{passed over} by $p$, whose arrival time $a_{\pi}(x)$ comes after the departure time $d_{\pi}(u)$ of $p$. $\Free(p)$ is the total number of balls in row $r-1$ whose arrival time comes after $d_{\pi}(u)-1$. Formally, 
\begin{align*} \Free(p)&=|\{x\in b_{r-1}:d_{\pi}(u)\leq a_{\pi}(x)\}|,\quad\text{and}\\
\Skip(p)&=|\{x=(r-1,k)\in b_{r-1}:d_{\pi}(u)<a_{\pi}(x)\ \text{and}\ i<k<j\pmod n\}|,\end{align*}
Now we define $\wt_{\pi}(M)=\prod_p \wt_{\pi}(p)$ to be the product of the weights over all the pairings in $M$. See \cref{ex:weight} for a computation of the weight of a multiline queue.
\end{defn}

Somewhat surprisingly, the total weight over all possible multiline queues with the same labeling is independent of the chosen pairing order! This is due to the following lemma.

\begin{lemma}[{\cite[Lemma 2.1]{CMW18}}]
Consider a ball system with a labeling that is compatible with some pairing. Let $\pi_1$ and $\pi_2$ be two fixed orderings corresponding to that labeling. Let $\mathcal{O}$ be the set of possible pairings corresponding to that labeling. Then,
\[
\sum_{M\in \mathcal{O}}\wt_{\pi_1}(M)=\sum_{M\in \mathcal{O}}\wt_{\pi_2}(M).
\]
That is, the total weight of all possible pairings for a fixed labeling is independent of the order in which we pair those balls. 
\end{lemma}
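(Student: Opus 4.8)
The plan is to prove the statement by induction on the number of strands, reducing the problem to the local move of swapping two adjacent strands in the pairing order and showing that this swap preserves the total weight. First I would observe that any two pairing orders $\pi_1$ and $\pi_2$ compatible with a fixed labeling differ by a sequence of transpositions of adjacent entries, where the entries being transposed are either two departures or a departure and an arrival (or two arrivals) that are \emph{not forced} to be in a particular relative order by the constraints (strands ordered by rows top to bottom, by label within a row, trivial pairings first). So it suffices to show: if $\pi_1$ and $\pi_2$ agree except that consecutive positions $k, k+1$ are swapped, then $\sum_{M\in\mathcal{O}}\wt_{\pi_1}(M) = \sum_{M\in\mathcal{O}}\wt_{\pi_2}(M)$.

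Next I would isolate which pairing strands have their weights affected by this swap. Swapping positions $k$ and $k+1$ only changes the values $d_\pi(u)$ and $a_\pi(x)$ at those two positions, so only the strands $p$ and $p'$ whose departure or arrival times are $k$ or $k+1$ can see a change in $\Skip$, $\Free$, or $\xi$. The key point is that $\wt_\pi(p)$ depends on the order only through $\Skip(p)$ and $\Free(p)$, each of which is a cardinality counting balls in the arrival row whose arrival time exceeds a threshold. Swapping two adjacent times either leaves these cardinalities unchanged or shifts one of them by exactly one ball — namely when that ball is the arrival site of the other swapped strand, and the two strands are in the same row with the passed-over/arrival relationship needed to make the ball count toward $\Skip$ or $\Free$. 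I would enumerate the (few) cases — both strands in the same row versus different rows; the relevant ball passed over or not — and in each case write down explicitly how $\Skip$ and $\Free$ change for the affected strands.

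The crux, and the main obstacle, is the single genuinely nontrivial case: the two swapped strands $p$ (departing $u$) and $p'$ (arriving at the ball $x$, which lies in $p$'s arrival row and is passed over by $p$) interact, and for a \emph{fixed} ball system $B$ the individual weight $\wt_{\pi_1}(B) \ne \wt_{\pi_2}(B)$. Here one must sum over the set $\mathcal{O}$ of all pairings with the given labeling. The strategy is to partition $\mathcal{O}$ into classes indexed by everything except the choice of arrival sites for the two interacting strands, and within each class exhibit the identity as an algebraic manipulation: the sum over the two possible "orderings" of where $p$ and $p'$ land reorganizes, using the factorization of $\wt_\pi(p)$ as $\frac{t^{\Skip}q^{\xi(\leg+1)}(1-t)}{1-t^{\Free}q^{\leg+1}}$, into a telescoping or partial-fraction identity of the shape $\frac{1-t}{1-t^m q^{\ell+1}} + \frac{t\,(\text{something})}{1-t^{m-1}q^{\ell+1}} = \frac{1-t}{1-t^{m-1}q^{\ell+1}}(\cdots)$, so that the two pairing orders yield equal sums. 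Concretely I would check the base identity on a minimal configuration (two rows, the smallest ball system realizing the interaction), verify it by direct computation, and then argue the general case factors as this base identity times a common weight for all the unaffected strands. Once the adjacent-swap case is established, the full statement follows by composing transpositions along any path from $\pi_1$ to $\pi_2$.
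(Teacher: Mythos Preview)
The paper does not prove this lemma; it is quoted with a citation to \cite[Lemma~2.1]{CMW18} and then immediately used to justify dropping the subscript $\pi$ from $\wt$. So there is no proof in the present paper to compare your proposal against.

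That said, your plan is the natural one and is essentially how such statements are handled in the literature: reduce to a single adjacent transposition in the pairing order, observe that only the two swapped strands can see a change in $\Skip$ or $\Free$, and verify a local algebraic identity after summing over the pairings in $\mathcal{O}$ that differ only in how those two strands are matched to their arrival sites. Two small points of care. First, the pairing order $\pi$ is an ordering of \emph{strands}, and the position of a strand in $\pi$ serves simultaneously as the departure time of its top ball and the arrival time of its bottom ball; so an adjacent swap always exchanges two full strands, not ``a departure and an arrival'' as separate objects. Second, when the swapped strands $p,p'$ share a row and a label, \emph{both} $\Free(p)$ and $\Free(p')$ shift by one (in opposite directions), and $\Skip$ can change for each depending on whether it passes over the other's arrival site --- so the identity you must check involves all four quantities at once, not a single ``crux'' strand. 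With those caveats your outline is sound.
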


This allows us to fix a canonical ordering $\pi$ (say, ordering balls from left to right when there is a choice), and drop the subscript when we refer to the weight $\wt(M)$ of a multiline queue. Thus we can present the definition of $\wt(M)$ as follows. Call $\Skip(M)=\sum_{p}\Skip(p)$ where the sum is over all non-trivial pairings $p$ (according to a fixed pairing order), and $\maj(M)=\sum_{p:\xi(p)=1} \leg(p)+1$. Then we may write
\begin{equation}\label{eq:wtM}
\wt(M)=q^{\maj(M)} t^{\Skip(M)} \prod_{p \nontriv}\frac{1-t}{1-q^{\leg(p)+1}t^{\Free(p)}}.
\end{equation}

\begin{remark}
Our definition of $\wt(M)$ is based on \cite{CMW18}, and is again a generalization with the added parameters $X, q$ of a modification of the weight function on multiline queues in \cite{martin-2020}. Specifically, the difference in the definitions comes from our rule for trivial pairings.
\end{remark}

For a (weak) composition $\alpha$ and $n=\ell(\alpha)$, define
\begin{equation}\label{eq:F}
F_{\alpha}(x_1,\ldots,x_n; q, t)  = \sum_{M\in\MLQ(\alpha)} x^M \wt(M)
 \end{equation}
 and for a partition $\lambda$ with $\ell(\lambda)\leq n$, define
\begin{align} 
	Z_{\lambda}(X; q, t) =Z_{\lambda}(x_1,\dots,x_n; q, t)&= \sum_{M\in\MLQ(\lambda,n)} x^M\wt(M)\label{eq:Z}\\ 
	&= \sum_{\alpha:\lambda(\alpha)=\lambda} F_{\alpha}(X; q, t).\nonumber
\end{align}

Notably, it is shown in \cite{CMW18} that $F_{\alpha}(X;q,t)$ is precisely the \emph{permuted basement Macdonald polynomial} $E_{\inc(\alpha)}^{\nu}(X;q,t)$, where $\nu$ is the longest permutation such that $\nu\circ\alpha=\inc(\alpha)$ and $\inc(\alpha)$ is the composition obtained by rearranging $\alpha$ to be weakly increasing. Combining with the result of \cite{CGW-2015}, it follows that $F_{\alpha}(X;q,t)$ interpolates between the permuted basement Macdonald polynomials and the stationary probabilities of the multispecies ASEP.

\begin{theorem}[{\cite[Theorem 1.11]{CMW18}}]\label{thm:main mlq}
Let $\lambda$ be a partition and $n$ a positive integer. Let $\alpha$ be a composition such that $\lambda(\alpha)=\lambda$. The stationary probability of the state $\alpha\in\ASEP(\lambda,n)$ is given by
\[
\Pr(\alpha) = \frac{1}{\mathcal{Z}_{\lambda}(t)}F_{\alpha}(x_1=\cdots=x_n=1;1,t), 
\]
where 
\[
\mathcal{Z}_{\lambda}(t)=Z_{\lambda}(x_1=\cdots=x_n=1;1,t). 
\]
Moreover, the symmetric Macdonald polynomial is equal to $Z_{\lambda}(X;q,t)$, which can be written as
\begin{equation}\label{eq:P mlq}
P_{\lambda}(X;q,t)= \sum_{M\in\MLQ(\lambda,n)} x^M q^{\maj(M)} t^{\Skip(M)} \prod_{p \nontriv}\frac{1-t}{1-q^{\leg(p)+1}t^{\Free(p)}}.
 \end{equation}
\end{theorem}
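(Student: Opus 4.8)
Because the excerpt already records the key identity $F_{\alpha}(X;q,t)=E_{\inc(\alpha)}^{\nu}(X;q,t)$ — the permuted-basement nonsymmetric Macdonald polynomial with $\nu$ the longest permutation satisfying $\nu\circ\alpha=\inc(\alpha)$ — and the theorem of \cite{CGW-2015} that the stationary weights of the multispecies ASEP on a ring are the $X=q=1$ specializations of exactly these polynomials (with $P_{\lambda}(1,\dots,1;1,t)$ as the partition function), the plan is first to deduce the statement formally from these two facts, and then to indicate how I would prove the underlying identity $F_{\alpha}=E_{\inc(\alpha)}^{\nu}$ itself. For the formal deduction: summing \eqref{eq:Z} over the rearrangement class $\{\alpha:\lambda(\alpha)=\lambda\}$ gives $Z_{\lambda}=\sum_{\alpha}E_{\inc(\alpha)}^{\nu}$, which is the standard decomposition of $P_{\lambda}(X;q,t)$ into its permuted-basement nonsymmetric pieces; unfolding $\wt(M)$ through \eqref{eq:wtM} turns this into \eqref{eq:P mlq}. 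Specializing $x_1=\dots=x_n=q=1$, the $F_{\alpha}$ become the ASEP stationary weights and $\mathcal{Z}_{\lambda}(t)=Z_{\lambda}(1,\dots,1;1,t)=P_{\lambda}(1,\dots,1;1,t)$ is their sum, giving $\Pr(\alpha)=F_{\alpha}(1,\dots,1;1,t)/\mathcal{Z}_{\lambda}(t)$.

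To prove the identity $F_{\alpha}=E_{\inc(\alpha)}^{\nu}$, I would use the recursive characterization of permuted-basement Macdonald polynomials. They are built from the antidominant seed — where both sides can be checked directly to be a single monomial — by iterating two operators: the Demazure--Lusztig (Hecke) operators $T_i$, which act combinatorially by swapping the bottom-row labels in two adjacent columns, and an affine cyclic-shift operator $\Phi$ that rotates the word and promotes a label. The content is a local computation: split $\MLQ(\alpha)$ according to the configuration at the affected columns (resp.\ the promoted ball), and verify that $\sum_{M}x^{M}\wt_{\pi}(M)$ transforms under this surgery with precisely the rational $q,t$-coefficients in the $T_i$- or $\Phi$-relation. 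One is free to compute with any convenient pairing order $\pi$, thanks to the order-independence Lemma \cite[Lemma 2.1]{CMW18} quoted above, and this is exactly what makes the telescoping of the weight ratios work.

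A second, more probabilistic route to the stationary-probability part avoids \cite{CGW-2015}, following Ferrari--Martin and Martin \cite{FM07, martin-2020}: define a continuous-time Markov chain directly on $\MLQ(\lambda,n)$ at $q=1$, $x_i=1$, with stationary mass proportional to $\wt(M)$, and prove that $\proj:\MLQ(\lambda,n)\to\ASEP(\lambda,n)$ lumps it onto the ASEP. Lumpability amounts to checking that for each ASEP transition $\alpha\to\alpha'$ of rate $c\in\{1,t\}$ and each $M\in\MLQ(\alpha)$, the total rate of moves from $M$ into $\MLQ(\alpha')$ is $c$; the genuinely new points beyond the totally asymmetric case are the $t$-weights and the modified trivial-pairing rule of \cite{CMW18}.

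Either way, the main obstacle is the weight bookkeeping. Inserting or deleting a ball in a row changes $\Skip(p)$, $\Free(p)$, $\maj(M)$, and the wrap indicator $\xi(p)$ simultaneously, and one must confirm that the induced change in $\prod_{p \nontriv}\frac{1-t}{1-q^{\leg(p)+1}t^{\Free(p)}}$, together with the $q^{\maj(M)}t^{\Skip(M)}$ prefactor, reproduces exactly the factors in the Hecke and affine relations for the $E^{\nu}$'s — matching the denominators $1-q^{\leg(p)+1}t^{\Free(p)}$ against the spectral parameters of $\Phi$ is the most delicate point. Once that local identity is verified for a single adjacent pair (resp.\ a single promoted ball), the identity $F_{\alpha}=E_{\inc(\alpha)}^{\nu}$ and hence the theorem follow by induction on the number of generators applied.
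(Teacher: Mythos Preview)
The paper does not prove this theorem at all: it is quoted verbatim from \cite{CMW18} and used as an input, so there is no ``paper's own proof'' to compare against. What the paper does record (in the paragraph preceding the statement) is precisely the outline you give in your first paragraph: that \cite{CMW18} establishes $F_{\alpha}(X;q,t)=E_{\inc(\alpha)}^{\nu}(X;q,t)$, and that combining this with the decomposition $P_{\lambda}=\sum_{\alpha}E_{\inc(\alpha)}^{\nu}$ and with the ASEP result of \cite{CGW-2015} yields both halves of the theorem. Your formal deduction and your inductive sketch via the Hecke operators $T_i$ and the cyclic shift $\Phi$ are exactly the strategy of \cite{CMW18}, and your identification of the weight bookkeeping (matching the change in $\Skip$, $\Free$, $\maj$, $\xi$ against the Hecke/affine coefficients) as the technical heart is accurate.

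Your second, probabilistic route via a lumped Markov chain on $\MLQ(\lambda,n)$ is the approach of \cite{martin-2020} rather than \cite{CMW18}; the paper alludes to this as a parallel proof of the $X=q=1$ part but does not pursue it. Note, however, that Martin's multiline queues differ from those in \cite{CMW18} precisely in the trivial-pairing convention (see the paper's \cref{sec:comparison}), so if you carried this route through you would need to reconcile the two sets of objects before concluding.
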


\begin{example}\label{ex:weight}
We compute $\wt(M)$ for the multiline queue $M$ in \cref{fig:MLQexample}. Fix a pairing order $(p_1,\ldots,p_8)$ and let $\dep(M)=(2,1,5,3,4,6,7,8)$ be the corresponding order of departures when scanning the rows from top to bottom and from left to right within each row. Thus $p_1$ is the pairing from $(4,3)$ to $(3,3)$ which is $\dep(M)_2$, $p_2$ is the pairing from $(4,1)$ to $(3,6)$ which is $\dep(M)_1$, etc.
\begin{itemize}
\item $p_1, p_3,p_6$ are trivial, so $\wt(p_1)=\wt(p_3)=\wt(p_6)=1$.
\item $p_2$ skips over $(3,2)$, $\leg(p_2)=0$, and $\Free(p_2)=2$, so $\wt(p_2)=\frac{t(1-t)}{1-qt^2}$.
\item $p_4$ wraps around the cylinder so $\xi(p_4)=1,\leg(p_4)=2,\Skip(p_4)=0,\Free(p_4)=2$, so $\wt(p_4)=\frac{q^2(1-t)}{1-q^2t^2}$.
\item Finally, $\wt(p_5)=\frac{1-t}{1-qt}$, $\wt(p_7)=\frac{t(1-t)}{1-q^3t^4}$, and $\wt(p_5)=\frac{1-t}{1-q^2t^3}$.
\end{itemize}
Thus 
\[\wt(M)=\frac{q^2t^2(1-t)^5}{(1-q^3t^4)(1-q^2t^3)(1-q^2t^2)(1-qt^2)(1-qt)}.\]
\end{example}

\subsection{The integral form $J_{\lambda}$}\label{sec:J}
As described in \cite[\S 4]{CHMMW20}, we get for free a multiline queue formula for $J_{\lambda}(X;q,t)$, the \emph{integral form} of the Macdonald polynomial $P_{\lambda}$. The integral form is defined as $J_{\lambda}(X;q,t)=\Pi_{\lambda}(q,t)P_{\lambda}(X;q,t)$, where
\[\Pi(\lambda) = \prod_{x\in M} (1-q^{\leg(x)}t^{\Free(x)+1}).\]
Looking at the contribution to $\Pi(\lambda)$ from cells $x$ for which $\leg(x)=0$, we get a factor of $\prod_{i\geq 1} (t;t)_{m_i}$, where $(t;t)_k =(1-t)(1-t^2)\cdots(1-t^k)$ and $\lambda=\langle 1^{m_1}2^{m_2}\cdots\rangle$ (i.e.~for $i\geq 1$, $m_i$ is the number of parts of length $i$ in $\lambda$). We shall write this term as $(1-t)^{\ell(\lambda)}\perm(\lambda)$, as it is precisely the $t$-analog of the number of ways of permuting the columns of the same height in $\dg(\lambda)$. We will revisit the statistic $\perm$ in \cref{def:perm}.

The multiline queue formula for $P_{\lambda}$ can be modified to describe $J_{\lambda}$ as follows. From \cref{def:MLQ weight}, we observe that if every pairing of a multiline queue of size $\lambda,n$ is non-trivial, then the product of the denominators $(1-q^{\ell-r+1}t^{\Free(p)})$ over all pairings $p$ precisely equals $\Pi(\lambda)(1-t)^{-\ell(\lambda)}\perm(\lambda)^{-1}$. Thus we only need to consider those pairings which do not contribute a factor in the denominator to the product, namely the trivial pairings. Then,
\begin{align}
J_{\lambda}(X;q,t)& = (1-t)^{\ell(\lambda)}\perm(\lambda)\sum_{M\in\MLQ(\lambda)} x^M q^{\maj(M)} t^{\Skip(M)} \times \prod_{p\ \nontriv}(1-t) \nonumber\\
&\hspace{2in}\times\prod_{p\ \triv}(1-q^{\leg(p)+1}t^{\Free(p)}) \label{eq:J}.
\end{align}

\subsection{Multiline queues as tableaux}\label{sec:tableaux}

There is a natural mapping from a multiline queue of size $(\lambda,n)$ to a filling of $\dg(\lambda)$ with integers $\{1,\ldots,n\}$. We correspond each strand of length $\ell$ to a column of length $\ell$ in $\dg(\lambda)$ by filling the $j$'th box of that column (from bottom to top) with the site number of the ball in the strand at row $j$. We name this queue-tableau map $\mathcal{q}$ (at this stage, it is only well-defined when $\lambda$ is a strict partition). We then impose an ordering on columns of the same height by requiring that the entries at row 1 are in increasing order: this makes $\mathcal{q}$ a bijection. See \cref{fig:queue-tableau-map} for an example. 

\begin{figure}[h!]
\centering
\begin{tikzpicture}[scale=0.5]
\def \h {1.2}
\def \w {1.2}
\def \r {.37cm}

\draw (1*\w,3*\h) circle (\r) node[color=black] {$4$};
\draw (3*\w,3*\h) circle (\r) node[color=black] {$4$};

\draw[->,black] (1*\w+.1, 3*\h-.4+.1) to[out=-20,in=150] (6*\w, 2*\h+.4);
\draw[->,black] (3*\w, 3*\h-.4) to[out=-90,in=90] (3*\w, 2*\h+.4);

\draw (2*\w,2*\h) circle (\r) node[color=black] {$3$};
\draw (3*\w, 2*\h) circle (\r) node[color=black] {$4$};
\draw (6*\w,2*\h) circle (\r) node[color=black] {$4$};

\draw[->,black] (2*\w+.1, 2*\h-.4+.1) to[out=-30,in=150] (5*\w, 1*\h+.4);
\draw[->,black] (3*\w, 2*\h-.4) to[out=-90,in=90] (3*\w, 1*\h+.4);
\draw[->,black] (6*\w, 2*\h-.4) to[out=-45,in=180] (6.5*\w, 1.5*\h);
\draw[->,black] (0.5*\w, 1.5*\h) to[out=0,in=120] (1*\w, 1*\h+.4);

\draw (1*\w,\h) circle (\r) node[color=black] {$4$};
\draw (3*\w,\h) circle (\r) node[color=black] {$4$};
\draw (5*\w,\h) circle (\r) node[color=black] {$3$};

\draw[->,black] (1*\w, 1*\h-.4) to[out=-90,in=90] (1*\w, 0*\h+.4);
\draw[->,black] (3*\w, 1*\h-.4) to[out=-90,in=90] (5*\w, 0*\h+.4);
\draw[->,black] (5*\w, 1*\h-.4) to[out=-90,in=90] (6*\w, 0*\h+.4);

\draw (1*\w,0) circle (\r) node[color=black] {$4$};
\draw (2*\w,0) circle (\r) node[color=black] {$1$};
\draw (4*\w,0) circle (\r) node[color=black] {$1$};
\draw (5*\w,0) circle (\r) node[color=black] {$4$};
\draw (6*\w,0) circle (\r) node[color=black] {$3$};

\node at (-1,3*\h) {\tiny row $4$};
\node at (-1,2*\h) {\tiny row $3$};
\node at (-1,\h) {\tiny row $2$};
\node at (-1,0) {\tiny row $1$};
\node at (\w,-1) {\tiny $1$};
\node at (2*\w,-1) {\tiny $2$};
\node at (3*\w,-1) {\tiny $3$};
\node at (4*\w,-1) {\tiny $4$};
\node at (5*\w,-1) {\tiny $5$};
\node at (6*\w,-1) {\tiny $6$};

\draw[thick,->,scale=1.3] (6.5,1)--(9,1) node[midway,above] {$\mathcal{q}$};

\node[scale=1.3] at (15,1.6) {\tableau{1&3\\6&3&2\\1&3&5\\1&5&6&2&4}};

\end{tikzpicture}
\caption{The map $\mathcal{q}$ from a multiline queue to a tableau of the same size.}\label{fig:queue-tableau-map}
\end{figure}
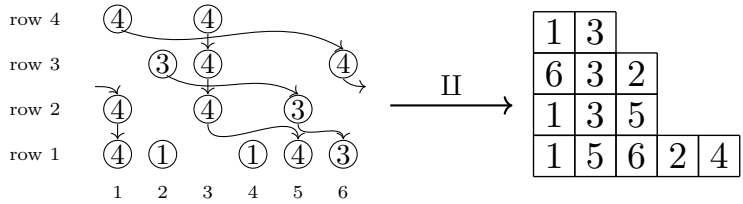

We observe a few things about how MLQ rules and statistics translate to rules and statistics on tableaux via this map. Let $M\in\MLQ(\lambda,n)$.
\begin{itemize}
\item[i.] At each row $r$ of the MLQ, there is at most one ball per site, so each entry in the set $\{1,\ldots,n\}$ can appear at most once in each row of the tableau.
\item[ii.] If a ball of label $i$ is above a ball of label $j$ in the MLQ, then either $i<j$, or $i=j$ and they are in the same strand. This means that if the same entry appears in rows $r$ and $r-1$ of the tableau, i.e. $\sigma(r,k)=\sigma(r-1,m)$, then $k>m$ and $\lambda_k<\lambda_m$ (the entry below is to the southwest in a larger column), or they are in the same column ($k=m$). In particular, they can never be an \emph{attacking pair} (see \cref{def:attacking}). 
\item[iii.] If a strand $p$ from a ball $(r,i)$ to a ball $(r-1,j)$ is wrapping, then $i>j$, corresponding to a \emph{descent} in the filling coming from the pair of cells $\tableau{i\\j}$, and $\leg(p)=\leg(u)$ where $u$ is the cell in the tableau containing the $i$. Thus $\maj(M)=\maj(\mathcal{q}(M))$.
\item[iv.] $\Free(p)$ and $\Skip(p)$ correspond to certain ``arm'' and ``coinversion'' statistics, which have a technical definition that can be found in \cite[\S 5]{CMW18}. However, in \cref{conj:compact} we will see a simplifying modification. 
\item[v.] We can naturally adapt the projection map $\proj$ for fillings by passing through $\mathcal{q}$: reading off the bottom row of a filling $\sigma$, for each $j$, the cell $(1,j)$ corresponds to a particle of type $\lambda_j$ at site $\sigma(1,j)$ of the ASEP of size $\lambda,n$.
\end{itemize}

We observe that these tableaux are suspiciously similar to \emph{permuted basement tableaux}, introduced by Ferreira in \cite{Fer11} and further studied by Alexandersson in \cite{Ale16}. To be precise, if we reflect the tableaux across the $y$-axis, we \emph{almost} obtain the permuted basement tableaux corresponding to $E_{\inc(\lambda)}^{\nu}$ for some permutation $\nu$. This is, of course, to be expected, since $F_{\alpha}(X;q,t) = E_{\inc(\alpha)}^{\nu}$ where $\nu\circ\alpha=\inc(\alpha)$ from \eqref{eq:F}. See \cite[Remark 5.7]{CMW18} for details.

\section{The modified Macdonald polynomials $\HH_{\lambda}(X;q,t)$}\label{sec:mTAZRP tableaux}

The modified Macdonald polynomials $\HH_{\lambda}(X;q,t)$ were originally introduced by Garsia--Haiman in \cite{GarsiaHaiman96}, and defined using plethystic notation in terms of the $J_{\lambda}$'s (the integral form of $P_{\lambda}$ defined in \cref{sec:J}) as
\begin{equation}\label{eq:H}
\HH_{\lambda}(X;q,t)=t^{n(\lambda)}J_{\lambda}\Big[\frac{X}{1-t^{-1}};q,t^{-1}\Big]
\end{equation}
where $n(\lambda)=\sum_i {\lambda_i'\choose 2}$. 

We postpone delving into the details of this formula until \cref{sec:plethysm}, and instead turn our focus to the combinatorics of the monomial expansion of $\HH_{\lambda}$.

\subsection{Queue inversion tableaux formula}

Define $\Tab(\lambda,n)$ to be the set of fillings of the diagram $\dg(\lambda)$ with the integers $1,\ldots,n$. Recall the celebrated Haglund--Haiman--Loehr formula for the modified Macdonald polynomials:
\begin{equation}\label{eq:HHL}
\HH_{\lambda}(x_1,\ldots,x_n;q,t) = \sum_{\sigma\in\Tab(\lambda,n)} x^{\sigma}q^{\maj(\sigma)}t^{\inv(\sigma)}.
\end{equation}
Here $\maj$ is from \cref{sec:stats}, and $\inv$ is a classical statistic introduced by Haglund--Haiman--Loehr in \cite{HHL05}.

In \cite{AMM20}, an analogous formula was given for $\HH_{\lambda}$ over the same set of fillings, but with the \emph{queue inversion} statistic ``$\quinv$'' instead of ``$\inv$'', the motivation for which is discussed in \cref{sec:plethysm}. We describe this statistic. For comparison to the inv statistic, please refer to \cite{HHL05}. 
\begin{defn}[Queue inversions]\label{def:quinv}
Given a diagram $\dg(\lambda)$, an \emph{$L$-triple} consists of either
\begin{itemize}
\item three cells $x=(r+1,i)$, $y=(r,i)$ and $z=(r,j)$ with $i<j$; or
\item two cells $y=(r,i)$ and $z=(r,j)$ with $i<j$ and  $\lambda_i = r$
(in which case the triple is called a \emph{degenerate triple}).
\end{itemize}
We shall refer to $L$-triples by the trio of cells $(x,y,z)$, in the orientation described. Let $a=\sigma(x)$, $b=\sigma(y)$, and $c=\sigma(z)$ be the contents of the cells of an $L$-triple, so the triple is in one of the configurations below: 
\begin{center}
\qtrip{$a$}{$b$}{$c$}, \qquad or \qquad \begin{tikzpicture}[scale=0.5]
\node at (-0.5,-.5) {$\emptyset$};
 \cell{2}{0}{$b$} \cell{2}{2.7}{$c$}
\node at (1,-1.5) {$\cdots$};
\end{tikzpicture}
\end{center}

We say that a triple $(x,y,z)$ is a \emph{queue inversion (quinv) triple} if its entries are oriented counterclockwise when the entries are read in increasing order, with ties being broken with respect to top-to-bottom, right-to-left reading order: in other words, if
\[a\leq b<c, \text{ or } b<c<a, \text{ or } c<a\leq b.\]
If the triple is degenerate with content $b,c$, it is a $\quinv$ triple if and only if $b<c$. 
\end{defn}

 \begin{figure}[ht!]
 \begin{tikzpicture}[scale=0.5]
\cell013\cell022
\cell111\cell123\cell131\cell143\cell153
\cell211\cell221\cell232\cell241\cell252\cell263\cell273
 \end{tikzpicture}
  \centering
  \caption{A tableau of type $\lambda=(3,3,2,2,2,1,1)$ and $n=3$. The weight of this filling is $x_1^5x_2^3x_3^6  q^5 t^{12}$.}\label{fig:tab}
 \end{figure}

The \emph{weight} of a filling $\sigma$ is then $x^{\sigma}t^{\quinv(\sigma)}q^{\maj(\sigma)}$, where $\quinv(\sigma)$ is the total number of $\quinv$ triples in $\sigma$, and $\maj$ is the classical major index statistic. See \cref{fig:tab} for an example of a tableau and its weight. Then we have the following.

\begin{theorem}[{\cite[Theorem 2.6]{AMM20}}]
\label{thm:mainconj}
Let $\lambda$ be a partition. The modified Macdonald polynomial can be written as
\begin{equation}\label{eq:H quinv}
\HH_{\lambda}(X;q,t) = \sum_{\sigma\in\Tab(\lambda)} x^{\sigma}t^{\quinv(\sigma)}q^{\maj(\sigma)}.
\end{equation}
\end{theorem}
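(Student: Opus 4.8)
The statement to prove is \cref{thm:mainconj}, namely that
\[
\HH_{\lambda}(X;q,t) = \sum_{\sigma\in\Tab(\lambda)} x^{\sigma}t^{\quinv(\sigma)}q^{\maj(\sigma)},
\]
the $\quinv$-analog of the Haglund--Haiman--Loehr formula \eqref{eq:HHL}. The cleanest route is to \emph{not} re-derive a combinatorial formula for $\HH_{\lambda}$ from scratch, but rather to show that the right-hand side equals the known HHL right-hand side $\sum_{\sigma} x^{\sigma}q^{\maj(\sigma)}t^{\inv(\sigma)}$. Since $\maj$ and $x^{\sigma}$ are identical on both sides, it suffices to produce, for each fixed content $\mu$ (i.e.\ for each fixed multiset of entries $\{1^{a_1},2^{a_2},\dots\}$) and each fixed descent set, a weight-preserving bijection on the relevant subset of $\Tab(\lambda)$ carrying $\quinv$ to $\inv$; or, more robustly, to prove $\sum_{\sigma}q^{\maj(\sigma)}t^{\quinv(\sigma)}x^\sigma = \sum_{\sigma}q^{\maj(\sigma)}t^{\inv(\sigma)}x^\sigma$ by an equidistribution argument that fixes $\maj$ and content and sums over all fillings with those data.

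\textbf{Step 1: reduce to columns and to a single pair of adjacent columns.} Both $\maj$ and the content are ``local to columns'' in the sense that $\maj$ only sees vertically-adjacent pairs within a column, and both $\inv$ and $\quinv$ decompose as sums over pairs of columns (each triple lives in two adjacent columns $i<j$, but in fact one can reorganize the count as a sum over \emph{all} pairs of columns via the standard HHL bookkeeping). The first move is therefore to fix the column contents (the multiset of entries in each column) — which already fixes $\maj$ and $x^\sigma$ — and show that, summed over all fillings with those column contents, $t^{\quinv}$ and $t^{\inv}$ have the same generating function. This is where one reduces to a two-column (in fact, effectively two-row-within-two-columns) statement: for two adjacent columns of heights $\ell_i \ge \ell_j$ with prescribed multisets of entries, the number of $\quinv$-triples with $z$ in the taller column summed appropriately matches the number of $\inv$-triples, as one ranges over all ways of ordering the entries within each column.

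\textbf{Step 2: the core combinatorial lemma.} The heart of the matter is the following: given two multisets $A$ (of size $k$) and $B$ (of size $k$ or $k+1$, the extra cell handling the degenerate/``$\emptyset$'' case) to be placed as ordered columns, the triple-counting statistic is equidistributed whether we use the ``counterclockwise'' quinv rule of \cref{def:quinv} or the HHL clockwise inv rule. This is typically proven by exhibiting an explicit involution or bijection on fillings of the two columns that reverses the reading order within one column (or conjugates by the longest permutation), mapping quinv-triples to inv-triples bijectively; one checks it preserves the multiset of entries in each column (hence $\maj$ and $x^\sigma$) and that the degenerate triples are handled consistently by the $\lambda_i = r$ convention and the $\South(u)=\infty$ convention. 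The attentive point is that \cref{def:attacking} was deliberately set up to be the reverse of HHL's, precisely so that this reversal bijection is the identity-compatible one.

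\textbf{Where the difficulty lies.} The main obstacle is Step 2 done \emph{globally} rather than column-pair-by-column-pair: triples are shared between pairs of columns, and a naive column-by-column reversal need not be globally consistent (reversing column $j$ to fix its interaction with column $i$ may break its interaction with column $k$). The standard resolution — and what I expect the actual proof to do — is to not reverse columns at all, but instead to set up a sign-reversing / weight-preserving bijection on the level of the full filling that tracks how a single ``swap'' of two entries in a column changes $\quinv$ versus $\inv$ by the same amount, i.e.\ to show $\quinv(\sigma) - \inv(\sigma)$ depends only on the column contents (is constant on each fixed-$\maj$, fixed-content class), possibly even that it is identically a fixed shift that telescopes to zero after summing. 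Concretely one argues: for a fixed set of column contents, both $\sum t^{\inv}$ and $\sum t^{\quinv}$ factor (via the column-independence of the triple count once column contents are fixed) into the same product of per-column-pair polynomials, each of which is a Gaussian binomial–type expression symmetric enough that the quinv and inv versions agree. Verifying that factorization and the symmetry of each factor — including the bookkeeping for the degenerate triples and for columns of equal height (the $\perm(\lambda)$ / $(t;t)_{m_i}$ business) — is the technical crux; everything else is assembling \eqref{eq:HHL} with this equidistribution.
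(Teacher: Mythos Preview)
Your approach is fundamentally different from the paper's, and it has a genuine gap. The paper's proof (citing \cite{AMM20}) follows the original Haglund--Haiman--Loehr strategy: it uses LLT polynomials and super-symmetric fillings to verify directly that the right-hand side of \eqref{eq:H quinv} satisfies the axioms that uniquely characterize $\HH_{\lambda}$. You instead propose to establish a joint equidistribution of $(\maj,\quinv)$ with $(\maj,\inv)$ on $\Tab(\lambda)$ and then quote \eqref{eq:HHL}. But the paper explicitly notes in the remark following \cref{thm:mainconj} that producing any such direct passage from $\inv$ to $\quinv$ was an \emph{open problem} at the time this theorem was first proved, resolved only later by Jin--Lin. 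So your program is not a proof: it defers the entire content to a lemma that was not available.

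Your outline also contains a concrete technical error that would block even a heuristic version of the argument. In Step~1 you assert that fixing the \emph{multiset} of entries in each column already fixes $\maj$. It does not: $\maj$ depends on the positions of descents, hence on the \emph{order} of entries within a column (a column reading $2,1$ from top to bottom has a descent while $1,2$ does not). This collapses the reduction: if you fix only the multisets, $\maj$ varies and the claimed factorization over column pairs in Step~2 has no reason to hold; if instead you fix each column as an ordered sequence so that $\maj$ really is fixed, then there is exactly one filling with that data and there is nothing left to sum over. Your ``Where the difficulty lies'' paragraph correctly intuits that a naive local reversal cannot be made globally consistent, but offers no mechanism to overcome it---which is precisely why the actual proof bypasses equidistribution with $\inv$ altogether and works with the axiomatic characterization of $\HH_{\lambda}$ instead.
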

The proof of this result in \cite{AMM20} is in the spirit of the original Haglund--Haiman--Loehr proof of \eqref{eq:H} in \cite{HHL05}: using LLT polynomials and super-symmetric fillings to show the formula satisfies the axioms that characterize the modified Macdonald polynomials.  In \cref{sec:plethysm}, modulo some technical details, we sketch a separate proof via superization by comparing tableaux with the quinv statistic to multiline queues.

\begin{remark} Although \eqref{eq:H quinv} appears very similar to \eqref{eq:HHL}, with the statistics $\inv$ and $\quinv$ being based on triples in the seemingly symmetric configurations \raisebox{-10pt}{\begin{tikzpicture}[scale=0.5]
\node at (0,0) {\tableau{x&&z\\y}};
\end{tikzpicture}} 
 and \raisebox{-10pt}{\begin{tikzpicture}[scale=0.5]
\node at (0,0) {\tableau{x\\y&&z}};
\end{tikzpicture}} 
 respectively, finding an explicit bijection between the two remains an open problem.\footnote{Since the writing of this article, Loehr gave bijective proofs of certain identities relating $\inv$ to $\quinv$ in \cite{Loehr-2022}. Even more recently, in \cite{JL24}, Jin and Lin gave a bijection on fillings $\Tab(\lambda)$ that maps $\inv$ to $\quinv$, thus resolving the question.} 
\end{remark}

\subsection{Queue inversion tableaux and probabilities of the mTAZRP}
The intended application for the fillings $\Tab(\lambda,n)$ with the $\quinv$ weight was originally to obtain formulas for probabilities of the mTAZRP of size $(\lambda,n)$. Recall the projection map $\proj:\MLQ(\lambda,n)\rightarrow \ASEP(\lambda,n)$ from \cref{def:MLQ}. We generalize this map to a map $\proj:\Tab(\lambda,n) \rightarrow \T(\lambda,n)$, to which we give the same name. (Implicitly, we are passing through the queue-tableau map $\mathcal{q}$.)

Let $\sigma\in\Tab(\lambda,n)$. Define $\proj(\sigma) = (w_1,\ldots,w_n)\in \TAZRP(\lambda,n)$, where $w_j = \{\lambda_s:1 \leq s \leq \ell(\lambda), \sigma(1,s)=j\}$. In other words, each site $w_j$ contains the multiset of column lengths of $\dg(\lambda)$ such that the entry in row 1 of those columns is equal to $j$. 

\begin{example} For the filling in \cref{fig:tab}, $\proj(\sigma)=(332|22|11)\in\T(3^22^31^3,3)$. 
\end{example}

Now, we obtain a formula for the stationary probabilities of the mTAZRP via this projection map, as an analog to \cref{thm:main mlq}.
\begin{theorem}[{\cite[Theorem 1.1]{AMM22}}]
Let $w\in\TAZRP(\lambda,n)$ be a state of the mTAZRP. Its stationary probability is given by
\[
\Pr(w) = \frac{1}{\widetilde{\mathcal{Z}}_{\lambda,n}} \sum_{\substack{\sigma\in\Tab(\lambda,n)\\\proj(\sigma)=w}}x^{\sigma}t^{\quinv(\sigma)},
\]
where $\widetilde{\mathcal{Z}}_{\lambda,n}$ is the combinatorial partition function of the mTAZRP:
\[
\widetilde{\mathcal{Z}}_{\lambda,n} = \sum_{\sigma\in\Tab(\lambda,n)}x^{\sigma}t^{\quinv(\sigma)}.
\]
\end{theorem}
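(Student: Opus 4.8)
The mTAZRP is a finite, irreducible, continuous-time Markov chain on $\T(\lambda,n)$, so it has a unique stationary distribution, and it is enough to exhibit one stationary vector for its generator. Set
\[
\Psi(w) \;:=\; \sum_{\substack{\sigma\in\Tab(\lambda,n)\\ \proj(\sigma)=w}} x^{\sigma}\,t^{\quinv(\sigma)},
\]
so that $\sum_{w}\Psi(w)=\widetilde{\mathcal{Z}}_{\lambda,n}$; the theorem is the assertion that $\Psi$ is stationary, i.e.\ that $\sum_{w'\neq w}R(w'\to w)\Psi(w')=\Psi(w)\sum_{w''\neq w}R(w\to w'')$ for every $w$, where $R(\cdot\to\cdot)$ are the rates of \cref{sec:TAZRP}. (Summing over $w$ already recovers $\widetilde{\mathcal{Z}}_{\lambda,n}=\HH_{\lambda}(X;1,t)$ via the $q=1$ case of \cref{thm:mainconj}; the content here is the state-by-state identity.) I would not verify these balance equations directly on $\T(\lambda,n)$; instead I would lift them to the larger set $\Tab(\lambda,n)$.

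\textbf{An enriched chain on tableaux.} The plan is to build a continuous-time Markov chain $\widetilde{\mathcal{M}}$ on $\Tab(\lambda,n)$ --- a ``TAZRP on tableaux'', where the columns of $\dg(\lambda)$ play the role of particle-stacks and the row-$1$ entries record occupied sites --- with two properties. First, the map $\proj\colon\Tab(\lambda,n)\to\T(\lambda,n)$ is a \emph{lumping} of $\widetilde{\mathcal{M}}$: for each $\sigma$ and each target $w'$, the sum of the $\widetilde{\mathcal{M}}$-rates out of $\sigma$ into $\proj^{-1}(w')$ equals $R(\proj(\sigma)\to w')$, and in particular depends only on $\proj(\sigma)$. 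Concretely, a transition selects a column $s$ with $\lambda_{s}=r$ whose row-$1$ entry is $j$, replaces that entry by $j+1$, and performs a prescribed rearrangement of the remaining entries of $s$ (and of the equal-height columns sitting over site $j$) chosen to make $\widetilde{\mathcal{M}}$ irreducible; it fires at a rate absorbing the factor $x_{j}^{-1}$ and a power of $t$ governed by the number of height-$>r$ columns over site $j$ and by the rank of $s$ among the height-$r$ columns over $j$, so that summing over the $c_{r}$ eligible columns reproduces the mTAZRP rate $x_{j}^{-1}t^{d_{r}}(1+t+\cdots+t^{c_{r}-1})$. Second, and crucially, the weight $\mu(\sigma):=x^{\sigma}t^{\quinv(\sigma)}$ should be stationary for $\widetilde{\mathcal{M}}$.

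\textbf{Conclusion.} Granting the two properties, the standard lumpability argument for Markov chains finishes the proof: if $\mu$ is stationary for $\widetilde{\mathcal{M}}$ and $\proj$ is a lumping, then the pushforward $w\mapsto\sum_{\proj(\sigma)=w}\mu(\sigma)=\Psi(w)$ is stationary for the lumped chain, which by construction is the mTAZRP; dividing by $\sum_{\sigma}\mu(\sigma)=\widetilde{\mathcal{Z}}_{\lambda,n}$ yields the stated formula for $\Pr(w)$.

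\textbf{Main obstacle.} The heart of the argument is showing that $\mu(\sigma)=x^{\sigma}t^{\quinv(\sigma)}$ is stationary for $\widetilde{\mathcal{M}}$, i.e.\ the tableau-level master equation. The $x^{\sigma}$ bookkeeping is painless --- a hop from $j$ to $j+1$ scales the content monomial by $x_{j+1}/x_{j}$, which cancels the $x_{j}^{-1}$ in the rate (the identity being homogeneous of degree $|\lambda|$ in $X$) --- so everything reduces to controlling the change of $\quinv$ under a column move. I would establish this by a weight-preserving exchange/involution argument in the spirit of \cite[Lemma~2.1]{CMW18}: fixing $\sigma$, pair the incoming transitions $\tau\to\sigma$ with the outgoing transitions $\sigma\to\tau'$ through a local correspondence on the cells actually disturbed --- the row-$1$ and row-$2$ entries of the moving columns and the quinv-triples they create or destroy --- arranged so that the $t$-exponents telescope and the inward flux from each neighbour of $\sigma$ matches the outward flux. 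The delicate points are that a single column move can simultaneously flip the quinv-status of many triples (all those built from the row-$2$ cell of a moving column together with row-$1$ cells of other columns), and that the prescribed rearrangement of equal-height columns must be shown to introduce no spurious quinv changes; marshalling these contributions so that the balance equation closes is where the real work lies. (A less self-contained alternative is to start from the known matrix-product form of the mTAZRP stationary measure and identify that trace with $\Psi(w)$, but the enriched-chain route above is the natural one and parallels the ASEP/multiline-queue development of \cref{sec:MLQ,sec:tableaux}.)
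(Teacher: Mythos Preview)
Your proposal is correct and follows essentially the same approach the paper describes (deferring to \cite{AMM22}): construct a Markov chain on $\Tab(\lambda,n)$ whose stationary distribution is $x^{\sigma}t^{\quinv(\sigma)}$ and which lumps to the mTAZRP via $\proj$, then push forward. You have also correctly isolated the substantive step, namely verifying the tableau-level balance equations for $\mu(\sigma)=x^{\sigma}t^{\quinv(\sigma)}$; your sketch of the rate structure and the lumping check is accurate, though of course the actual construction and the involution controlling the $\quinv$ changes must be carried out in full, as in \cite{AMM22}.
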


This result is proved in \cite{AMM22} by explicitly constructing a Markov chain on $\Tab(\lambda,n)$ whose stationary distribution is given by $\wt(\sigma)(q=1)=x^{\sigma}t^{\quinv(\sigma)}$, and which lumps to (projects onto) the mTAZRP Markov chain via the projection map.

\begin{example} We show all six tableaux and their weights $x^{\sigma}t^{\quinv(\sigma)}$ corresponding to $w=(\cdot|21|1) \in\TAZRP((2,1,1),3)$:

\begin{center}
\begin{tikzpicture}[scale=0.4]
\cell00{$1$}
\cell10{$2$}\cell11{$3$}\cell12{$2$}
\node at (.5,-2) {$x_1x_2^2x_3t$};

\cell05{$2$}
\cell15{$2$}\cell16{$3$}\cell17{$2$}
\node at (5.5,-2) {$x_2^3x_3t$};

\cell0{10}{$3$}
\cell1{10}{$2$}\cell1{11}{$3$}\cell1{12}{$2$}
\node at (10.5,-2) {$x_2^2x_3^2$};

\cell0{15}{$1$}
\cell1{15}{$2$}\cell1{16}{$2$}\cell1{17}{$3$}
\node at (15.5,-2) {$x_1x_2^2x_3t^2$};

\cell0{20}{$2$}
\cell1{20}{$2$}\cell1{21}{$2$}\cell1{22}{$3$}
\node at (20.5,-2) {$x_2^3x_3t^2$};

\cell0{25}{$3$}
\cell1{25}{$2$}\cell1{26}{$2$}\cell1{27}{$3$}
\node at (25.5,-2) {$x_2^2x_3^2t$};
\end{tikzpicture}
\end{center}
Thus $\widetilde{\mathcal{Z}}_{(2,1,1),3}\Pr(w)=x_2^2x_3(1+t)(x_1t+x_2t+x_3)$.
\end{example}

\subsection{A compact $\quinv$ formula for $\HH_{\lambda}$}\label{sec:CHMMW20}

In \cite{CHMMW20}, a compact version of the Haglund--Haiman--Loehr formula was given for $\HH_{\lambda}$ when $\lambda$ is not a strict partition, by summing over fillings with a canonical ordering on columns of equal height: 
\begin{theorem}[{\cite{CHMMW20}}]
The modified Macdonald polynomial can be written as
\begin{equation}\label{eq:HHL compact}
\HH_{\lambda}(X;q,t) = \sum_{\substack{\sigma\in\Tab(\lambda)\\\sigma\,\isort}} \perm(\sigma)x^{\sigma}q^{\maj(\sigma)}t^{\inv(\sigma)},
\end{equation}
where a filling is \emph{inv sorted} if its columns of equal height are arranged in the unique order such that the filling has minimal $\inv$. $\perm(\sigma)$ is defined in \cref{def:perm}.
\end{theorem}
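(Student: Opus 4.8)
The plan is to deduce this compact formula directly from the Haglund--Haiman--Loehr expansion \eqref{eq:HHL} by grouping the fillings in $\Tab(\lambda,n)$ into orbits under the group $G=\prod_{h\ge 1}S_{m_h(\lambda)}$ that permutes, for each height $h$, the columns of $\dg(\lambda)$ of that height among themselves (each column moving rigidly, carrying all its entries). One first checks that $x^{\sigma}$ and $q^{\maj(\sigma)}$ are constant on each $G$-orbit: the content monomial obviously is, and for $\maj$ one uses that if $u=(r,j)$ with $r\ge 2$ then $\South(u)=(r-1,j)$ lies in the same column and is moved together with $u$, so $\Des(\sigma)$ is permuted rigidly by $G$, while $\leg(u)=\lambda_j-r$ depends only on $r$ and on the height of column $j$, which $G$ preserves (and cells in row $1$ are never descents). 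Hence \eqref{eq:HHL} rearranges as
\[
\HH_{\lambda}(X;q,t)=\sum_{\mathcal{O}}x^{\sigma_{\mathcal{O}}}\,q^{\maj(\sigma_{\mathcal{O}})}\sum_{\sigma\in\mathcal{O}}t^{\inv(\sigma)},
\]
the outer sum over $G$-orbits $\mathcal{O}$ with $\sigma_{\mathcal{O}}\in\mathcal{O}$ arbitrary, and it remains to show the inner sum equals $\perm(\sigma_{0})\,t^{\inv(\sigma_0)}$ for the $\inv$-minimal element $\sigma_0$ of the orbit.

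Next I would split $\inv(\sigma)=\inv_0(\sigma)+\sum_h\inv_h(\sigma)$, where $\inv_h(\sigma)$ collects exactly those inversion triples of $\inv$ (together with the corresponding arm-correction terms on descents) whose two columns both have height $h$, and $\inv_0$ is everything else. Since column heights weakly decrease along $\dg(\lambda)$ and $G$ only permutes within height blocks, two columns of distinct heights always occur in a fixed left-to-right order, and the data feeding $\inv_0$ depends only on column contents, not positions; so $\inv_0$ is constant on each orbit. On the other hand $\inv_h(\sigma)=\sum_{p<p'}N_h(v_p,v_{p'})$, where $v_1,\dots,v_{m_h}$ is the left-to-right sequence of contents of the height-$h$ columns of $\sigma$ and $N_h(u,u')$ is the net number of triples (minus arm adjustments) contributed by a height-$h$ column with content $u$ placed to the left of one with content $u'$. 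Checking that this bookkeeping is exact --- in particular that the arm corrections reorganize cleanly into the $G$-invariant part plus these pairwise terms --- is routine but requires some care.

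The crux is a transposition lemma: for distinct content words $u\ne u'$ of a common height $h$, one has $N_h(u,u')+N_h(u',u)$ independent of order, $|N_h(u,u')-N_h(u',u)|=1$, and $N_h(u,u')=N_h(u',u)$ only when $u=u'$. I would prove this by comparing the two placements row by row, as in the original analysis behind \eqref{eq:HHL} and its $\quinv$-counterpart \eqref{eq:H quinv}; the delicate point is verifying these row-by-row comparisons are mutually consistent. Granting the lemma, the relation $u\preceq u'\iff N_h(u,u')\le N_h(u',u)$ is a total order on height-$h$ contents (antisymmetric by the last clause), so arranging all height-$h$ columns in $\preceq$-increasing order simultaneously minimizes every term $N_h(v_p,v_{p'})$; this arrangement attains $\min_{\mathcal{O}}\inv$, and since reorderings of identical columns give the same filling it is the unique minimizer --- which is precisely the statement that ``$\inv$-sorted'' is well defined and picks out $\sigma_0$.

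Finally, the lemma gives $\inv(\sigma)-\inv(\sigma_0)$ equal to the number of pairs $p<p'$ that are in the wrong $\preceq$-order in $\sigma$; recording, in each height block, which $\preceq$-class (equivalently, which distinct content of $\sigma_0$) each column carries identifies $\mathcal{O}$ with the set of distinct rearrangements of a multiset and this count with the inversion number of that rearrangement. Summing $t$ to that power over $\mathcal{O}$ yields $\prod_h\binom{m_h}{\mu^{(h)}}_{t}$, a product of Gaussian multinomial coefficients where $\mu^{(h)}$ records the multiplicities of the distinct height-$h$ contents of $\sigma_0$; this product is exactly $\perm(\sigma_0)$ from \cref{def:perm} (reducing to $\prod_h[m_h]_t!$ when all columns are distinct, matching the factor in \cref{sec:J}). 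Plugging $\sum_{\sigma\in\mathcal{O}}t^{\inv(\sigma)}=\perm(\sigma_0)\,t^{\inv(\sigma_0)}$ into the rearrangement of \eqref{eq:HHL} above gives \eqref{eq:HHL compact}. The main obstacle is the transposition lemma together with exactness of the column-pair decomposition of $\inv$; everything else is bookkeeping.
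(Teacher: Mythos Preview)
Your grouping by the action of $G=\prod_h S_{m_h}$ via \emph{rigid} column permutations is the wrong equivalence relation, and the transposition lemma you need is false. Swapping two entire equal-height columns does not in general change the statistic by exactly~$\pm 1$. For the $\quinv$ analog (the situation for $\inv$ is parallel): take two height-$3$ columns with contents $u=(1,2,3)$ and $v=(3,2,1)$ read top to bottom. With $u$ on the left the pair contributes two $\quinv$ triples (the degenerate one in row~$3$ and the $L$-triple $(2,3,1)$), while with $v$ on the left it contributes none, a difference of~$2$. So $|N_h(u,v)-N_h(v,u)|=1$ cannot hold, your inversion-count identification of the orbit sum with a Gaussian multinomial collapses, and the ``routine'' pairwise decomposition of $\inv$ you allude to does not rescue it.

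This is exactly why the proof in \cite{CHMMW20} (mirrored for $\quinv$ in the proof of \cref{thm:compact}) does not use $G$. It uses operators $\widehat{\tau}_j$, the $\inv$ analog of the $\tau_j$ in \cref{def:tau}, which swap entries in columns $j,j+1$ only from the topmost differing row down to the first row where the swap no longer affects the relevant triple, and then stop. By design each $\widehat{\tau}_j$ changes $\inv$ by exactly $\pm 1$ and fixes $\maj$ (cf.\ \cref{thm:tau}), but the price is that the multiset of column contents is \emph{not} preserved: in \cref{ex:quinv order} the sorted tableau contains a column $(2,2,2)$ that appears nowhere among the columns of the original $\sigma$. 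Hence the $\widehat{\tau}_j$-orbit of $\sigma$ is not its $G$-orbit, and the inv-sorted representative of a filling is generally not a rigid rearrangement of its columns. Because the $\widehat{\tau}_j$ also fail the braid relations, one further needs the positive-distinguished-subexpression machinery to pick a canonical reduced word for each orbit element and obtain the decomposition \eqref{eq:decomp}; your rigid-permutation shortcut skips precisely the mechanism that makes the $\perm(\sigma)$ factor appear.
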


It turns out that an analogous compact formula exists for \cref{thm:mainconj}.

\begin{defn}\label{def:ordered}
We say $\sigma\in\Tab(\lambda,n)$ is \emph{quinv sorted} if for any pair of columns of equal height, the topmost pair of differing entries is not contained in a queue inversion. More precisely, for a pair of unequal columns $A,B$ of equal height $\ell$ with entries $a_{\ell},\ldots,a_1$ and $b_{\ell},\ldots,b_1$ respectively from top to bottom, let $1\leq r\leq \ell$ be maximal such that $a_r\neq b_r$. Then, we say $A>B$ if the cells corresponding to $a_{r+1},a_r,b_r$ form a quinv triple (as before, if $r=\ell$, by our convention we set $a_{r+1}=0$). A filling is \emph{quinv-sorted} if for every pair of columns of equal height $A,B$, if $A<B$, then $A$ is to the left of $B$. See \cref{ex:order} for the ordering and \cref{ex:quinv order} for a $\quinv$-sorted filling.
\end{defn}

\begin{example}\label{ex:order}
The $\quinv$-ordering for the following set of columns is:
\[
\raisebox{8pt}{\tableau{4\\2\\3}}\quad<\quad\raisebox{8pt}{\tableau{4\\1\\2}}\quad<\quad\raisebox{8pt}{\tableau{4\\1\\1}}\quad<\quad\raisebox{8pt}{\tableau{3\\2\\3}}\quad<\quad\raisebox{8pt}{\tableau{2\\2\\3}}\quad<\quad\raisebox{8pt}{\tableau{1\\3\\2}}
\]
\end{example}

Define the $t$-integer $[k]_t=\frac{1-t^k}{1-t}=1+t+\cdots+t^{k-1}$, the $t$-factorial
\[
[m]_t!=[m]_t[m-1]_t\cdots[2]_t,
\] 
and the $t$-multinomial coefficient for $m=m_1+\cdots+m_k$:
\[
{n\brack m_1,\ldots,m_k}=\frac{[n]_t!}{[m_1]_t![m_2]_t!\cdots[m_k]_t!}.
\]

\begin{defn}\label{def:perm}
Let $\sigma^{(i)}$ be a filling of a $m\times i$ rectangular diagram ($m$ columns of length $i$). Suppose $\sigma^{(i)}$ has $k$ distinct columns with multiplicities $m_1,\ldots,m_k$ (whence $m=m_1+\cdots+m_k$). We define the statistic $\perm(\sigma^{(i)})$ to be the inversion generating function of a word in the letters $\langle 1^{m_1}\cdots k^{m_k}\rangle$ (in frequency notation):
\[
\perm(\sigma^{(i)}) = {m \brack m_1,\ldots,m_k}_t.
\]
Then, for a filling $\sigma\in\Tab(\lambda,n)$, let $\sigma^{(i)}$ be the (possibly empty) rectangular block consisting of the columns of height $i$ of $\sigma$. Define
\[
\perm(\sigma) = \prod_{i=1}^{\lambda_1} \perm(\sigma^{(i)}).
\]
Described in another way, $\perm(\sigma)$ is simply the $t$-analog of the number of distinct ways of permuting the columns of $\sigma$ within the shape $\dg(\lambda)$.

We note that if all columns of $\sigma$ are distinct, then $\perm(\sigma)$ is equal to $\perm(\lambda)$ as defined in \cref{sec:J}.
\end{defn}

\begin{example}
Consider the tableau $\sigma$ in \cref{fig:tab}. $\perm(\sigma^{(1)})=1$ since the two columns of height 1 are equal, $\perm(\sigma^{(2)})=[3]_t[2]_t$, and $\perm(\sigma^{(1)})=[2]_t$. Thus $\perm(\sigma) = (1+t+t^2)(1+t)^2$.
\end{example}

\begin{theorem}\label{thm:compact}
The modified Macdonald polynomial can be written as
\begin{equation}\label{eq:H perm}
\HH_{\lambda}(X;q,t) = \sum_{\substack{\sigma\in\Tab(\lambda)\\\sigma\,\qsort}} \perm(\sigma)x^{\sigma}q^{\maj(\sigma)}t^{\quinv(\sigma)}.
\end{equation}
\end{theorem}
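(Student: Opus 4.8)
The plan is to deduce \eqref{eq:H perm} from the Haglund--Haiman--Loehr-type formula \eqref{eq:H quinv} by grouping the fillings in $\Tab(\lambda)$ into equivalence classes under permutation of columns of equal height, and showing that each class contributes $\perm(\sigma_0)\, x^{\sigma_0} q^{\maj(\sigma_0)} t^{\quinv(\sigma_0)}$ where $\sigma_0$ is the unique $\quinv$-sorted representative of the class. Concretely, fix a shape $\dg(\lambda)$ and a multiset of column contents; rearranging these columns among the positions of equal height does not change $x^{\sigma}$, and — because $\maj$ depends only on descents within each column together with the leg statistic, which is unchanged by permuting columns of a fixed height — it does not change $q^{\maj(\sigma)}$ either. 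So the entire content of the reduction is the behavior of $\quinv$ under such permutations.

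The key step is therefore the following lemma, which I would isolate and prove first: \emph{if $\sigma'$ is obtained from $\sigma$ by permuting the columns of height $i$ (for a single $i$), and $\sigma$ is $\quinv$-sorted in those columns, then $\quinv(\sigma') - \quinv(\sigma)$ equals the number of inversions of the permutation, read through the total order of \cref{def:ordered}.} To see this, decompose $\quinv(\sigma)$ into (a) triples entirely among columns \emph{not} of height $i$, (b) triples with the pair $y,z$ both in height-$i$ columns, possibly with $x$ in a height-$i$ column of the next row up, and (c) triples where exactly one of $y,z$ lies in a height-$i$ column. Contributions of type (a) are manifestly invariant. For type (c), a triple $(x,y,z)$ with, say, $y$ in a height-$i$ column $C$ and $z$ in a taller column $D$ to the right: swapping two height-$i$ columns either both lie to the left of $D$'s column-position or the count is governed only by whether $C$'s content sits left or right of the other, and a short case analysis (using that all height-$i$ columns occupy consecutive... or at least a fixed set of... positions, and that $z$ ranges over the taller columns symmetrically) shows the net change over \emph{all} type-(c) triples involving a given unordered pair of height-$i$ columns is zero — the gain from one orientation cancels the loss from the other, because the cyclic/counterclockwise condition in \cref{def:quinv} is antisymmetric under exchanging $y\leftrightarrow z$. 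That leaves type (b): for a single pair of height-$i$ columns $A,B$, exactly one of the degenerate triple (at the top, via the $a_{r+1}=0$ convention) or the genuine triple at the topmost row $r$ where they differ registers as a $\quinv$ triple, and by the very definition of the order "$A>B$" in \cref{def:ordered} it registers precisely when $A$ is to the right of $B$. Hence each transposition that moves the larger column left decreases $\quinv$ by exactly one, which is the statement that $\quinv$ restricted to a class, as a function of the arrangement, is the inversion statistic of the corresponding word.

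Granting the lemma, the proof concludes quickly. Summing $t^{\quinv}$ over one equivalence class of fillings factors, over the heights $i$, into $\prod_i$ of the inversion generating function of a word with letter-multiplicities $m_1,\ldots,m_k$ (the multiplicities of the distinct height-$i$ columns), which by the standard $q$-multinomial identity is exactly $\prod_i {m \brack m_1,\ldots,m_k}_t = \perm(\sigma_0)$ in the notation of \cref{def:perm}, with the minimum $t$-power attained at the $\quinv$-sorted arrangement $\sigma_0$. Therefore
\[
\sum_{\sigma\in\Tab(\lambda)} x^{\sigma} q^{\maj(\sigma)} t^{\quinv(\sigma)}
= \sum_{\substack{\sigma_0\in\Tab(\lambda)\\ \sigma_0\,\qsort}} \perm(\sigma_0)\, x^{\sigma_0} q^{\maj(\sigma_0)} t^{\quinv(\sigma_0)},
\]
and the left side is $\HH_{\lambda}(X;q,t)$ by \cref{thm:mainconj}, giving \eqref{eq:H perm}. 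The main obstacle is the lemma's case (c): one must verify cleanly that triples straddling a height-$i$ column and a taller column contribute nothing net to the difference, and for this it is cleanest to pair up the two orientations of each unordered pair of height-$i$ columns and invoke the antisymmetry of the "counterclockwise" condition, rather than to attempt a direct sign-counting argument; care is also needed with the boundary/degenerate triples and with the $a_{r+1}=0$ convention so that the top of each column is handled consistently with \cref{def:ordered}.
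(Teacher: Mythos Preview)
Your overall strategy---partition $\Tab(\lambda)$ into classes indexed by a canonical representative and show each class contributes $\perm(\sigma_0)$ times the weight of $\sigma_0$---is the same as the paper's. However, the specific equivalence you use, namely \emph{literal permutation of entire columns of equal height}, does not work, and your key lemma is false.

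The failure is in your case~(b). You correctly identify that the triple $(a_{r+1},a_r,b_r)$ at the topmost differing row $r$ flips quinv-status under the swap, matching \cref{def:ordered}. But you have silently discarded all the $L$-triples between $A$ and $B$ at rows \emph{below} $r$: for each $s<r$ the triple $(a_{s+1},a_s,b_s)$ is replaced under the column swap by $(b_{s+1},b_s,a_s)$, and there is no reason these should have the same quinv-status. A minimal counterexample: take two columns of height $2$, $A=\raisebox{4pt}{\tableau{3\\1}}$ and $B=\raisebox{4pt}{\tableau{1\\2}}$. With $A$ left of $B$ the two type-(b) triples are the degenerate $(\emptyset,3,1)$ (not quinv) and $(3,1,2)$ (quinv), total $1$. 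With $B$ left of $A$ they are $(\emptyset,1,3)$ (quinv) and $(1,2,1)$ (not quinv), again total $1$. So the full column swap changes $\quinv$ by $0$, not $1$, even though $A<B$ in the order of \cref{def:ordered}. (Your cases (a) and (c) are in fact fine: type-(c) triples depend only on the multiset of height-$i$ column contents and the fixed contents of the other columns, since every height-$i$ position lies on the same side of every non-height-$i$ column.)

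The paper's remedy is precisely to replace ``swap columns $j,j+1$'' by the operator $\tau_j$ of \cref{def:tau}, which swaps entries only in a contiguous block of rows from the topmost differing row $\ell$ down to a row $h$ determined adaptively by when the swap stops affecting the quinv-status of the triple below. This guarantees $\quinv(\tau_j\sigma)=\quinv(\sigma)\pm1$ (\cref{thm:tau}), at the cost that the $\tau_j$ no longer satisfy braid relations. The paper then organizes the orbits using positive distinguished subexpressions, exactly as in \cite{CHMMW20}, to recover the decomposition \eqref{eq:decomp} and the factor $\perm(\sigma)$. Your equivalence classes are simply not the right ones; the class of a quinv-sorted $\sigma_0$ under the $\tau_j$ action generally contains fillings that are \emph{not} column-permutations of $\sigma_0$ (in the example above, $\tau_1$ applied to $A|B$ yields columns $\raisebox{4pt}{\tableau{1\\1}}$ and $\raisebox{4pt}{\tableau{3\\2}}$, with $\quinv=2$).
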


\begin{example}
As an example, we compute the following coefficient using six fillings with \eqref{eq:H perm} instead of the ten needed with \eqref{eq:H quinv}. 
\[[x_1^3x_2^2]H_{(2,2,1)}(X;q,t)=1+t+q+2t^2+2qt+q^2+t^3+qt^2.\]
\begin{center}
\renewcommand{\arraystretch}{2}
\begin{tabular}{c c c c c c}
\tableau{1&1\\2&2&1} &\tableau{1&1\\2&1&2} &\tableau{2&1\\1&2&1} &\tableau{2&1\\1&1&2} &\tableau{2&1\\2&1&1}&\tableau{2&2\\1&1&1}\\ 
$1$&$t(1+t)$&$q(1+t)$&$qt(1+t)$&$t^2(1+t)$&$q^2$
\end{tabular}
\end{center}
\end{example}

The proof of \cref{thm:compact} follows the strategy of \cite{CHMMW20}. We introduce entry-swapping operators $\{\tau_j\}$ to generate the entire set $\Tab(\lambda,n)$ from the set of $\quinv$-sorted tableaux, so that each sorted tableau $\sigma$ generates a set of fillings with total weight precisely equal to $\wt(\sigma)\perm(\sigma)$. 

\begin{defn}[The operators $\tau_j$]\label{def:tau}
Let $\sigma\in\Tab(\lambda,n)$, and suppose $\lambda_j=\lambda_{j+1}=k$. We will define an operator $\tau_j$ which exchanges contents of certain cells between columns $j$ and $j+1$. If the fillings of columns $j$ and $j+1$ are equal in $\sigma$, then $\tau_j(\sigma)=\sigma$. Otherwise, write $\sigma(r,j)=a_r$ and $\sigma(r,j+1)=b_r$ for $r=1,\dots, k$, and let $\ell$ be maximal such that $a_{\ell}\neq b_{\ell}$. $\tau_j$ swaps the pair $a_{\ell}$ and $b_{\ell}$ in columns $j,j+1$ in row $\ell$, and iteratively, if the pair in columns $j,j+1$ in row $i\leq \ell$ is swapped and this makes a difference to whether the triple $\{(i,j), (i-1, j), (i-1, j+1)\}$ is a quinv triple, then $\tau_j$ swaps the pair in columns $j,j+1$ of row $i-1$ also. The swapping terminates either when $i=1$, or when the swap at row $i$ doesn't change whether the triple $((i,j),(i-1,j),(i-1,j+1))$ is a quinv triple. Then $\tau_j(\sigma)$ is the tableau obtained by performing these iterative swaps in columns $j,j+1$, while keeping all other entries unchanged. See the diagram below, where the swapping terminates at row $h$. See \cref{ex:tau}.
\[
\begin{tikzpicture}[scale=.75]
\node at (0,1.5) {$\sigma$};
\node at (6,1.5) {$\tau_j(\sigma)$};
\node at (-2,.5) {\small row $k$};
\node at (-2,-.5) {$\vdots$};
\node at (-2,-2.5) {\small row $\ell$};
\node at (-2,-3.5) {$\vdots$};
\node at (-2,-4.5) {\small row $h$};
\cell00{$a_k$}\cell01{$a_k$}
\cell10{$\vdots$}\cell11{$\vdots$}
\cell20{$a_{\ell+1}$}\cell21{$a_{\ell+1}$}
\cell30{$a_{\ell}$}\cell31{$b_{\ell}$}
\cell40{$\vdots$}\cell41{$\vdots$}
\cell50{$a_h$}\cell51{$b_h$}
\cell60{$a_{h-1}$}\cell61{$b_{h-1}$}
\cell70{$\vdots$}\cell71{$\vdots$}
\node at (-0.5,-7.5) {\tiny $j$};
\node at (0.5,-7.5) {\tiny $j+1$};
\node at (3,-2.5) {$\xrightarrow{\makebox[1.5cm]{$\tau_j$}}$};
\cell06{$a_k$}\cell07{$a_k$}
\cell16{$\vdots$}\cell17{$\vdots$}
\cell26{$a_{\ell+1}$}\cell27{$a_{\ell+1}$}
\cell36{$b_{\ell}$}\cell37{$a_{\ell}$}
\cell46{$\vdots$}\cell47{$\vdots$}
\cell56{$b_h$}\cell57{$a_h$}
\cell66{$a_{h-1}$}\cell67{$b_{h-1}$}
\cell76{$\vdots$}\cell77{$\vdots$}
\node at (5.5,-7.5) {\tiny $j$};
\node at (6.5,-7.5) {\tiny $j+1$};
\end{tikzpicture}
\]
\end{defn}

\begin{example}\label{ex:quinv order}
For the tableau $\sigma$ below, we apply the sequence of operators $\tau_1, \tau_2, \tau_3$ to obtain the corresponding quinv-sorted tableau $\tau_3\circ\tau_2\circ\tau_1(\sigma)$:
\[
\sigma=\raisebox{8pt}{\tableau{1&3&2&1\\2&3&3&3&4\\2&1&4&1&2\\}}\xrightarrow{\makebox[.5cm]{$\tau_1$}} \raisebox{8pt}{\tableau{3&1&2&1\\3&2&3&3&4\\1&2&4&1&2\\}}\xrightarrow{\makebox[.5cm]{$\tau_2$}}  \raisebox{8pt}{\tableau{3&2&1&1\\3&2&3&3&4\\1&2&4&1&2\\}}\xrightarrow{\makebox[.5cm]{$\tau_3$}} \raisebox{8pt}{\tableau{3&2&1&1\\3&2&3&3&4\\1&2&1&4&2\\}}
\]
\end{example}

\begin{example}\label{ex:tau}
Suppose $\sigma$ has columns $j,j+1$ as shown below. Then $k=6$, $\ell=5$, and $h=3$, since swapping the 2 with the 3 at row 3 changes the triple at rows $h,h-1$ from $\tableau{2\\3&4}$ to $\tableau{3\\3&4}$, which is a quinv triple in both cases. Thus applying the operator $\tau_j$ gives the following.

\[
\begin{tikzpicture}[scale=.5]
\node at (0,1.5) {$\sigma$};
\node at (6,1.5) {$\tau_j(\sigma)$};
\node at (-2.5,.5) {\small row $k$};
\node at (-2.5,-.5) {\small row $\ell$};
\node at (-2.5,-1.5) {$\vdots$};
\node at (-2.5,-2.5) {\small row $h$};
\node at (-2.5,-3.5) {$\vdots$};
\node at (-2.5,-4.5) {\small row $1$};
\cell00{2}\cell01{2}
\cell10{3}\cell11{4}
\cell20{2}\cell21{3}
\cell30{2}\cell31{3}
\cell40{3}\cell41{4}
\cell50{1}\cell51{3}
\node at (-0.5,-5.5) {\tiny $j$};
\node at (0.5,-5.5) {\tiny $j+1$};
\node at (3,-1.5) {$\xrightarrow{\makebox[1.5cm]{$\tau_j$}}$};
\cell06{2}\cell07{2}
\cell16{4}\cell17{3}
\cell26{3}\cell27{2}
\cell36{3}\cell37{2}
\cell46{3}\cell47{4}
\cell56{1}\cell57{3}
\node at (5.5,-5.5) {\tiny $j$};
\node at (6.5,-5.5) {\tiny $j+1$};
\end{tikzpicture}
\]
\end{example}

The main function of the $\tau_j$'s is to swap entries so as to preserve the $\maj$ and change the $\quinv$ in a controlled way, as follows.

\begin{lemma}[{\cite[Lemmas 7.5 and 7.6]{AMM20}}]\label{thm:tau}
Let $\sigma\in\Tab(\lambda,n)$ with $\lambda_j=\lambda_{j+1}$.
\begin{itemize}
    \item If columns $j,j+1$ of $\sigma$ are not identical, then $\quinv(\tau_j(\sigma))=\quinv(\sigma)\pm 1$.
    \item $\maj(\tau_j(\sigma))= \maj(\sigma)$.
    \end{itemize}
\end{lemma}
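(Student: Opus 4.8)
Since $\tau_j$ (\cref{def:tau}) exchanges the contents of columns $j$ and $j+1$ throughout the contiguous block of rows $\{h,h+1,\dots,\ell\}$ — $\ell$ being the topmost row in which the two columns of $\sigma$ differ, and $h$ the row at which the propagation terminates — and fixes every other cell, the plan is to track precisely which descents and which $L$-triples can be affected: only cells of columns $j,j+1$ in rows of $[h,\ell]$ change, so only descents and triples meeting such a cell are in play. For the $\maj$ statement, note that since $\lambda_j=\lambda_{j+1}$ we have $\leg(r,j)=\leg(r,j+1)$ for every $r$, so it suffices to show that for each row $r$ the number of descent cells among $\{(r,j),(r,j+1)\}$ is the same for $\sigma$ and $\tau_j(\sigma)$. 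If rows $r$ and $r-1$ both lie in $[h,\ell]$, or both lie outside it, the two cells merely trade columns and the count is unchanged; at the top transition, row $\ell+1$ over row $\ell$, the entries of columns $j,j+1$ in row $\ell+1$ are equal, which again preserves the count. The single delicate transition is from row $h$ to row $h-1$, and here I would invoke the elementary observation that for fixed $b,c$ the maps $x\mapsto\mathbf{1}[x>b]-\mathbf{1}[x>c]$ and $x\mapsto\quinv(x,b,c)$ are affine functions of one another (the second is $1$ minus the first when $b<c$, the negative of the first when $b>c$, and identically $0$ when $b=c$); hence one is constant on a two-element set iff the other is. Taking $b=\sigma(h-1,j)$, $c=\sigma(h-1,j+1)$, the termination rule of $\tau_j$ — that swapping row $h$ does not change whether $((h,j),(h-1,j),(h-1,j+1))$ is a quinv triple — says exactly that $\quinv(x,b,c)$ agrees whether $x$ is the original entry of $(h,j)$ or of $(h,j+1)$, hence so does $\mathbf{1}[x>b]-\mathbf{1}[x>c]$, which rearranges precisely to the equality of descent counts in row $h$.

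For the $\quinv$ statement I would classify the affected $L$-triples (degenerate ones handled by the same reasoning, with the absent top cell playing no role) and show that everything cancels except one $\pm1$. A triple whose stacked cells $x,y$ lie in a column $p\notin\{j,j+1\}$ and whose cell $z$ lies in column $j$ or $j+1$ pairs with the triple obtained by moving $z$ to the other of those two columns; as $\tau_j$ swaps the two entries involved, the pair's total contribution is unchanged. A triple whose stacked cells lie in column $j$ (resp.\ $j+1$) with $z$ in a column $m>j+1$ pairs with its mirror having stacked cells in column $j+1$ (resp.\ $j$): if both stacked rows are in $[h,\ell]$ the stacked entries are swapped wholesale; if only the upper stacked row is outside (the row-$\ell$ boundary), the agreement of the two columns in row $\ell+1$ forces cancellation; and the remaining case (stacked cells in rows $h$ and $h-1$, so $z$ in row $h-1$) cancels by the same ``affine in the indicator'' computation, again driven by the termination rule. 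After all this, $\quinv(\tau_j(\sigma))-\quinv(\sigma)$ equals its change on the \emph{internal} triples, those with all three cells in columns $j,j+1$. Writing $a_r=\sigma(r,j)$, $b_r=\sigma(r,j+1)$, and letting $a_{\ell+1}=b_{\ell+1}$ be the common entry of row $\ell+1$ in those columns (read as $0$ if $\ell=k$), the internal triple in rows $h,h-1$ is fixed by the termination rule, while the rest contribute
\[
\sum_{r=h}^{\ell}\Big(\quinv\big(b_{r+1},b_r,a_r\big)-\quinv\big(a_{r+1},a_r,b_r\big)\Big).
\]
I would evaluate this telescoping sum using the behaviour of $\quinv$ under transposing the bottom two cells of a triple — which reverses the orientation unless those cells carry equal entries — together with the minimality of $\ell$, to conclude it collapses to $\pm1$.

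The crux, and the part demanding the most care, is this last telescope: because of ties, $\quinv$ is not simply complemented by transposing two cells of a triple, so one must follow the cascade one row at a time and check that, apart from a single surviving term, the change produced by swapping row $r$ is annihilated by the change produced by swapping row $r-1$ — exactly the phenomenon the propagation rule of $\tau_j$ is engineered to create — and making this airtight requires the full case split on the relative order of $a_r,a_{r+1},b_r,b_{r+1}$. The $\maj$ statement, by contrast, reduces cleanly to the single identity relating $\quinv(x,b,c)$ to $\mathbf{1}[x>b]-\mathbf{1}[x>c]$.
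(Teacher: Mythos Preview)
The paper does not supply its own proof of this lemma; it is quoted verbatim from \cite[Lemmas~7.5 and~7.6]{AMM20}. So there is nothing in the present paper to compare against, and your task is really to reproduce (or replace) the argument from that reference.

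Your plan is correct and, in fact, tighter than you give yourself credit for. The identity you isolate can be written uniformly as
\[
\quinv(x,b,c)=\mathbf{1}[x>c]-\mathbf{1}[x>b]+\mathbf{1}[b<c],
\]
valid for all $x,b,c$ (including ties). From this one reads off immediately the complementation law
\[
\quinv(x,b,c)+\quinv(x,c,b)=\mathbf{1}[b\neq c],
\]
and both are exactly what you need. For the $\maj$ statement your reduction to the single boundary $r=h$ is right, and the uniform identity turns the termination condition $\quinv(a_h,a_{h-1},b_{h-1})=\quinv(b_h,a_{h-1},b_{h-1})$ directly into the equality of descent counts, with no case split. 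For the external triples with stacked cells in columns $j,j+1$ at rows $h,h-1$ and $z$ in some column $m>j+1$, the same identity gives
\[
\bigl(\quinv(a_h,a_{h-1},c)-\quinv(b_h,a_{h-1},c)\bigr)-\bigl(\quinv(a_h,b_{h-1},c)-\quinv(b_h,b_{h-1},c)\bigr)=0
\]
as an immediate consequence of the termination condition, again without cases.

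The one place where your write-up is more tentative than it needs to be is the ``telescope'' for the internal triples. It is not a telescope at all: for each $r\in[h,\ell-1]$ one has $a_r\neq b_r$ (else propagation would have stopped earlier), so the complementation law gives $\quinv(b_{r+1},b_r,a_r)=1-\quinv(b_{r+1},a_r,b_r)$, and the propagation rule says $\quinv(b_{r+1},a_r,b_r)=1-\quinv(a_{r+1},a_r,b_r)$; hence the $r$-th summand vanishes outright. Only the $r=\ell$ term survives, and there $a_{\ell+1}=b_{\ell+1}$ and $a_\ell\neq b_\ell$ force it to equal $1-2\,\quinv(a_{\ell+1},a_\ell,b_\ell)=\pm 1$. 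No ``full case split on the relative order of $a_r,a_{r+1},b_r,b_{r+1}$'' is required.
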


\begin{proof}[Proof of \cref{thm:compact}]
In \cite{CHMMW20}, an analogous ordering on columns (we shall refer to it as an \emph{inv-order}) and an analogous set of operators $\{\widehat{\tau}_j\}$ are defined to be compatible with the $\inv$ statistic. They prove that applying the $\widehat{\tau}_j$'s to the set of inv-sorted tableaux generates $\Tab(\lambda,n)$, obtaining \eqref{eq:HHL compact}, which is a compact version of \eqref{eq:HHL}. Precisely the same arguments will prove \eqref{eq:H perm} for the $\tau_j$'s applied to quinv-sorted tableaux of \cref{def:ordered}. 

Unfortunately, the $\tau_j$'s do not in general satisfy braid relations. Thus we need to choose a canonical way to compose sequences of $\tau_j$'s, which for the $\widehat{\tau}_j$'s is done using \emph{positive distinguished subexpressions} (PDS) in \cite[\S 3.2]{CHMMW20}. Let $\sigma$ be a quinv-sorted filling. We apply the same reasoning here to define a family of tableaux $\{\tau_j\}\sigma$, generated by applying sequences of $\tau_j$'s obtained from a suitable set of PDS to $\sigma$. We omit all details here, since the arguments in \cite[\S 3.3]{CHMMW20} work just as well for our setting. Then,
\begin{equation}\label{eq:decomp}
\Tab(\lambda,n) = \biguplus_{\substack{\sigma\in\Tab(\lambda,n)\\\sigma\ \isort}} \{\tau_j\}\sigma.
\end{equation}

Let $\quinv'=\tau_{i_k}\circ\cdots\circ\tau_{i_1}(\sigma)\in\{\tau_j\}\sigma$, where $s_{i_k}\cdots s_{i_1}$ is a PDS. By \cref{thm:tau}, we deduce that $\quinv(\sigma')=t^k\quinv(\sigma)$. (Morally, this is permuting the columns of $\sigma$ by the permutation $s_{i_k}\cdots s_{i_1}$.) The length (i.e.~inversion) generating function of the set of PDS compatible with $\sigma$ is precisely equal to $\perm(\sigma)$. Thus we obtain
\[\sum_{\sigma'\in\{\tau_j\}\sigma}x^{\sigma'}q^{\maj(\sigma')}t^{\quinv(\sigma')} = x^{\sigma}q^{\maj(\sigma)}t^{\quinv(\sigma)}\perm(\sigma).\]
Combined with \eqref{eq:decomp}, this proves the result.
\end{proof}

\section{On the plethystic relationship between the $\quinv$ tableau formula and multiline queues}\label{sec:plethysm}

We begin with a discussion of the modified Macdonald polynomials $\HH_{\lambda}(X;q,t)$ and the intuition behind the combinatorial interpretation of the plethystic substitution leading to the introduction of the \emph{queue inversion} (quinv) statistic and the discovery of the connection with the mTAZRP. Recall the definition of $\HH_{\lambda}$ in \eqref{eq:H} and the multiline queue formula \eqref{eq:J} for $J_{\lambda}(X;q,t)$. For a symmetric function $f$, the plethystic notation $f[X/(1-t^1)]$ represents the substitution $p_k(x)\rightarrow p_k(x)/(1-t^k)$ into the power-sum expansion of $f$ (where $p_k(x)=\sum_i x^k$ is the $k$'th power sum symmetric function). In \eqref{eq:H}, this translates to the formal substitution of the monomials $X=x_1,\ldots,x_n$ by the monomials $X/(1-t^{-1})=x_1,x_1t^{-1},x_1t^{-2},\ldots,x_2,x_2t^{-1},x_2t^{-2},\ldots,x_n,x_nt^{-1},x_nt^{-2},\ldots$ into $J_{\lambda}(X;q,t^{-1})$. Expanding \eqref{eq:H} as
\[\HH_{\lambda}(X;q,t)=t^{n(\lambda)}J_{\lambda}(x_1,x_1t^{-1},x_1t^{-2},\ldots,x_2,x_2t^{-1},x_2t^{-2},\ldots;q,t^{-1})\]
and using \eqref{eq:J}, we may interpret this polynomial as a generating function over infinite multiline queues. Specifically, we consider multiline queues with a countable number of columns labeled by $x_1,x_1t^{-1},x_1t^{-2},\ldots,x_2,x_2t^{-1},x_2t^{-2},\ldots$, so that any ball in a column labeled by $x_it^{-j}$ contributes $x_it^{-j}$ to the weight of the configuration. To get back to a finite object, we group the infinite multiline queues into a finite set of equivalence classes, by ``forgetting" the powers of $t$ labeling each column while keeping the same pairings, and associating an appropriate weight to each pairing to account for the compression and the lost factors of $t^{-j}$. This results in a ``multi-capacity multiline queue'', which we define to be a \emph{multiline diagram} in which multiple balls can occupy any given site.\footnote{One could also call this a "fused multiline queue", or "bosonic multiline queue" following \cref{rem:fusion}.} 

Although it becomes cumbersome to evaluate the compression factor directly, this notion of a multi-capacity queue led the authors of \cite{CHMMW20} to a new combinatorial formula for $\HH_{\lambda}(X;q,t)$ through the $\Skip$ statistic of multiline queues, which was reformulated as the queue inversion statistic (the formula was subsequently proved in \cite{AMM20}). To be precise, the queue inversion statistic is essentially the negation of the "$\Skip$" statistic in the multiline queues, and this relationship in many ways parallels the transformation from Haglund--Haiman--Loehr nonattacking fillings and coinversions for computing $P_{\lambda}$ and $J_{\lambda}$ to unrestricted fillings and inversions for computing $\HH_{\lambda}$. We note that \cite{AMM20} focused on tableaux rather than multiline diagrams, so obtaining analogous formulas in terms of multiline diagrams is yet to be done in a rigorous manner. 

\begin{remark}\label{rem:fusion}
In fact, it was shown in \cite{garbali-wheeler-2020} that the plethystic substitution to obtain $\HH_{\lambda}$ from $J_{\lambda}$ corresponds to a known concept in physics called \emph{fusion}, introduced by Kulish--Reshetikhin--Sklyanin \cite{KRS81}. In physics, a \emph{fermionic} particle system is one having the property of ``exclusion", where any site can contain at most one particle, whereas a \emph{bosonic} particle system is one without this restriction. With this terminology, the ASEP is fermionic and the TAZRP is bosonic. The compression we describe above of an infinite family of fermionic objects (multiline queues with countably many columns for $P_{\lambda}$) into a finite family of bosonic objects (multiline diagrams for $\HH_{\lambda}$) is an example of fusion. Using matrix products, \cite{garbali-wheeler-2020} express the fused weights of the fermionic lattice model for $J_{\lambda}$ from \cite{cantini-etal-2016} as infinite sums that end up being positive polynomials in all parameters. We have not found a way to explicitly connect our results to theirs, but it would be very interesting to understand this relationship.
\end{remark}
 
 \subsection{Using superization to obtain a new formula for $P_{\lambda}$ from \eqref{eq:H quinv}}
In \cite[\S 8]{HHL05} a formula for $J_{\lambda}(X;q,t)$ is derived from the extension of \eqref{eq:HHL} to super-fillings. Their formula reduces to the formula \cite[Proposition 8.1]{HHL05} for $J_{\lambda}(X;q,t)$ as a sum over non-attacking fillings of $\dg(\lambda)$ (see \cref{def:attacking}) using the coinversion statistic. We use the same strategy here to derive a quinv analog, which will correspond to the ``compression factor'' described above.

For a given $n\in\mathbb{N}$, define the alphabet $\mathcal{A} = \{1,\bar{1},2,\bar{2},\ldots,n,\bar{n}\}$. A \emph{super-filling} of $\dg(\lambda)$ is a function $\sigma:\dg(\lambda)\rightarrow \mathcal{A}$, and is denoted by $\widetilde{\Tab}(\lambda,n)$. For $\sigma\in\widetilde{\Tab}(\lambda,n)$, define $p(\sigma)$ and $m(\sigma)$ to be the numbers of positive and negative entries in $\sigma$, respectively, and define $|\sigma|\in\Tab(\lambda,n)$ to be the filling whose entries are the absolute values of the entries in $\sigma$. Fix the total ordering $<$ on $\mathcal{A}\cup\{0\}$ to be  $\{0<1<\bar{1}<2<\bar{2}<\cdots< n<\bar{n}\}$. For $a,b\in\mathcal{A}\cup\{0\}$ we introduce the notation
\[
I(a,b)=\begin{cases} a> b,&a\neq b\\ 0&a\in\mathbb{Z}_+\\ 1&a\in\mathbb{Z}_-.
\end{cases}
\] 
to generalize the notion of ``greater than" on $\mathbb{Z}_+$. We can now extend the definition of a descent to a super-filling: $x\in\dg(\lambda)\in\Des(\sigma)$ if $I(\sigma(x),\sigma(\South(x))=1$. We also extend the definition of a quinv triple: an $L$-triple $(x,y,z)$ is a quinv triple if exactly \emph{one} of the following is true:
\[
\{I(\sigma(x),\sigma(y))=1,I(\sigma(z),\sigma(y))=0,I(\sigma(x),\sigma(z))=0\}.
\]
When $\sigma=|\sigma|$, we recover the original definitions of $\Des$ and $\quinv$.

Using \cite[Proposition 4.3]{HHL05}, the following tableaux formula is derived in \cite[\S 3]{AMM20}:
\begin{equation}\label{eq:C}
\HH_{\lambda}[X(t-1);q,t] = \sum_{\sigma\in\widetilde{\Tab}(\lambda,n)}(-1)^{m(\sigma)}x^{|\sigma|}q^{\maj(\sigma)}t^{p(\sigma)+\quinv(\sigma)}.
\end{equation}

Rearranging \eqref{eq:H}, one obtains an expression for $J_{\lambda}(X;q,t)$:
\begin{align}
\HH_{\lambda}(X;q,t^{-1})&=t^{-n(\lambda)}J_{\lambda}\Big[\frac{X}{1-t};q,t\Big],\nonumber\\
t^{n(\lambda)}\HH_{\lambda}[X(1-t);q,t^{-1}]&=J_{\lambda}\Big[X;q,t\Big],\nonumber\\
t^{n(\lambda)+n}\HH_{\lambda}[X(t^{-1}-1);q,t^{-1}]&=J_{\lambda}(X;q,t).\label{eq:J}
\end{align}

We invoke \cite[Theorem 5.3]{AMM20} to write \eqref{eq:C} as a sum over quinv-non-attacking fillings (see \cref{def:attacking}):
\begin{equation}\label{eq:H nonattacking}
J_{\lambda}(X;q,t) = t^{n(\lambda)+n} \sum_{\substack{\sigma\in\widetilde{\Tab}(\lambda,n)\\|\sigma|\,\qnona}}(-1)^{m(\sigma)}x^{|\sigma|}q^{\maj(\sigma)}t^{-p(\sigma)-\quinv(\sigma)}.
\end{equation}

To reduce \eqref{eq:H nonattacking} to quinv-non-attacking fillings of $\Tab(\lambda,n)$ in the positive alphabet, we follow the strategy in \cite[\S 8]{HHL05} . For each $\tau\in\Tab(\lambda,n)$, we consider the set of fillings $\sigma\in\widetilde{\Tab}(\lambda,n)$ such that $\tau=|\sigma|$. We observe that for any entries $a,b\in \mathcal{A}$, if $|a|\neq|b|$, then $I(a,b)=I(|a|,|b|)$. Moreover, in any $L$-triple $(x,y,z)$, either $\tau(x)\neq\tau(y)\neq\tau(z)$, or $\tau(x)=\tau(y)\neq\tau(z)$ (where $y=\South(x)$) by the non-attacking condition. So, to obtain the difference $\quinv(\sigma)-\quinv(\tau)$ and $\maj(\sigma)-\maj(\tau)$, we need only to consider descents and $L$-triples that involve a cell $u\in\dg(\lambda)$ such that $\tau(u)=\tau(\South(u))$.

\begin{defn}\label{def:rarm}
For a cell $u\in\dg(\lambda)$ in a row $r>1$, define the statistic $\rarm(u)$ to be the number of cells to the right of $u$ in the row below. In other words, $\rarm(r,c) = |\{(r-1,j)\in\dg(\lambda):j>c\}|$, which is also equal to the leg of $\South(u)$ in $\dg(\lambda')$. See \cref{fig:leg} for an example of $\rarm$. \end{defn}

Each $u\in\dg(\lambda)$ such that $\tau(u)\neq \tau(\South(u))$ brings a factor of $(t^{-1}-1)$ for $\sigma(u)\in\mathbb{Z}_+$ and $\sigma(u)\in\mathbb{Z}_-$, respectively (as $\quinv(\sigma)=\quinv(\tau)$ and $\maj(\sigma)=\maj(\tau)$ for such $u$). Now consider $u\in\dg(\lambda)$ such that $\tau(u)=\tau(\South(u))$.
\begin{itemize}
    \item If $\sigma(u)\in\mathbb{Z}_+$, then $I(\sigma(u),\sigma(\South(u)))=I(\tau(u),\tau(\South(u)))=0$ and so $\maj(\sigma)=\maj(\tau)$. Moreover, every $L$-triple $(u,\South(u),z)$ is a quinv triple in $\sigma$  and in $\tau$. Thus every such $u$ contributes a $t^{-1}$.
    \item If $\sigma(u)\in\mathbb{Z}_-$, then $I(\sigma(u),\sigma(\South(u)))=1-I(\tau(u),\tau(\South(u)))=1$ and thus $(u,\South(u),z)$ is never a quinv triple in $\sigma$ for any $z$, while it is one in $\tau$; the number of choices for the cell $z$ is equal to $\rarm(u)$. Moreover, this $u$ is a descent in $\sigma$, but not one in $\tau$.  Thus, every such $u$ contributes $(-\rarm(u))$ to the difference $\quinv(\sigma)-\quinv(\tau)$, and $\leg(u)+1$ to the difference $\maj(\sigma)-\maj(\tau)$.
\end{itemize}
We conclude that the total contribution for every $u$ such that $\tau(u)=\tau(\South(u))$ from all choices for $\sigma$ is $(t^{-1}-q^{\leg(u)+1}t^{\rarm(u)})$. 

Thus we obtain the following formula, after distributing the $t^n$ to the contribution for each cell $u\in\dg(\lambda)$, where \eqref{eq:P from H} follows from \eqref{eq:J}. Define $\coquinv(\sigma) := n(\lambda)-\quinv(\sigma)$. 

\begin{theorem}
Fix a partition $\lambda$. Then
\begin{multline}\label{eq:J from H}
J_{\lambda}(X;q,t) =  \sum_{\substack{\tau\in\Tab(\lambda)\\\tau\,\qnona}}x^{\tau}q^{\maj(\tau)}t^{\coquinv(\tau)}
 \prod_{\substack{u\in\dg(\lambda)\\\tau(u)\neq\tau(\South(u))}}(1-t)\\
\times \prod_{\substack{u\in\dg(\lambda)\\\tau(u)=\tau(\South(u))}}(1-q^{\leg(u)+1}t^{\rarm(u)+1}).
\end{multline}
and 
\begin{multline}\label{eq:P from H}
P_{\lambda}(X;q,t) =  \frac{1}{\perm(\lambda)}\sum_{\substack{\tau\in\Tab(\lambda)\\\tau\,\qnona}}x^{\tau}q^{\maj(\tau)}t^{\coquinv(\tau)}
\\
\times \prod_{\substack{u\in\dg(\lambda)\\\tau(u)\neq\tau(\South(u))}}\frac{1-t}{1-q^{\leg(u)+1}t^{\rarm(u)+1}},
\end{multline}
where both sums are over quinv-non-attacking tableaux.
\end{theorem}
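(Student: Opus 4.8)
The plan is to run the Haglund--Haiman--Loehr argument of \cite[\S 8]{HHL05} in the quinv setting: start from the super-filling expansion of a plethystic substitution into $\HH_\lambda$ and collapse it onto ordinary fillings by summing over bar patterns, exactly as HHL recover their non-attacking $J_\lambda$ formula from the superized version of \eqref{eq:HHL}. Almost all of the non-formal work is already assembled above: rearranging the definition \eqref{eq:H} realizes $J_\lambda(X;q,t)$ as a power of $t$ times a plethystic substitution into $\HH_\lambda$, and combining the super-filling identity \eqref{eq:C} of \cite{AMM20} with the reduction to non-attacking fillings \cite[Theorem 5.3]{AMM20} gives \eqref{eq:H nonattacking}, which writes $J_\lambda$ as a signed sum, with an explicit $t$-power prefactor, over quinv-non-attacking super-fillings $\sigma\in\widetilde{\Tab}(\lambda,n)$.

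First I would group the sum in \eqref{eq:H nonattacking} by the underlying ordinary filling $\tau:=|\sigma|$, which ranges over quinv-non-attacking elements of $\Tab(\lambda,n)$, and for each such $\tau$ sum over the $2^{|\lambda|}$ choices of which cells of $\tau$ to bar. The crux is that $\maj(\sigma)$ and $\quinv(\sigma)$ depend on the bar pattern only through cells $u$ with $\tau(u)=\tau(\South(u))$: since $I(a,b)=I(|a|,|b|)$ whenever $|a|\neq|b|$, and since the non-attacking hypothesis forces $\tau(x)\neq\tau(z)$ and $\tau(y)\neq\tau(z)$ in every $L$-triple $(x,y,z)$ (see \cref{def:attacking,def:quinv}), the only entry whose sign can flip a triple's quinv-status, or create a descent, is one sitting directly above an equal entry. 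Writing $p(\sigma)=|\lambda|-m(\sigma)$ so that each barred cell carries an effective factor $-t$, the bar-pattern sum then factors as a product of independent local contributions, one per cell of $\dg(\lambda)$.

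Next I would evaluate the local factors. A cell $u$ with $\tau(u)\neq\tau(\South(u))$---in particular every bottom-row cell---contributes $1-t$, since its unbarred and barred versions carry weights $1$ and $-t$ and affect neither statistic. A cell $u$ with $\tau(u)=\tau(\South(u))$ contributes $1-q^{\leg(u)+1}t^{\rarm(u)+1}$: the unbarred version keeps each of the $\rarm(u)$ triples $(u,\South(u),z)$ a quinv triple and leaves $u$ a non-descent, for weight $1$, whereas the barred version removes all of those triples from quinv-status---and \cref{def:rarm} counts exactly the admissible cells $z$---while making $u$ a descent of major-index weight $q^{\leg(u)+1}$, for total weight $-q^{\leg(u)+1}t^{\rarm(u)+1}$. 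Collecting these over all cells and absorbing the leftover negative powers of $t$ into the global prefactor of \eqref{eq:H nonattacking} leaves $t^{\,n(\lambda)-\quinv(\tau)}=t^{\coquinv(\tau)}$ in front of $\tau$'s term, which is \eqref{eq:J from H}. Then \eqref{eq:P from H} follows by dividing through by the integral-form normalization $\Pi_\lambda(q,t)=J_\lambda/P_\lambda$: regrouping the two cell-products of \eqref{eq:J from H} into $\prod_{u\in\dg(\lambda)}(1-q^{\leg(u)+1}t^{\rarm(u)+1})$ (the bottom-row factors being $1$) times the sum appearing in \eqref{eq:P from H}, and matching that product against the factorization of $\Pi_\lambda$ recorded in \cref{sec:J}, whose $\leg=0$ part is exactly $(1-t)^{\ell(\lambda)}\perm(\lambda)$, produces the $1/\perm(\lambda)$.

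I expect the main obstacle to be the local bookkeeping at cells $u$ with $\tau(u)=\tau(\South(u))$: one must reconcile the sign coming from $(-1)^{m(\sigma)}$, the $q^{\leg(u)+1}$ from the freshly created major-index descent, and---above all---the $t^{\rarm(u)}$ coming from the quinv triples that vanish when $u$ is barred, and then keep enough control over powers of $t$ that the global prefactor collapses to the clean exponent $\coquinv(\tau)$. The genuinely hard structural inputs---the original superization identity \eqref{eq:C} and the passage to non-attacking fillings \eqref{eq:H nonattacking}---are imported from \cite{HHL05,AMM20}, so everything else is careful accounting.
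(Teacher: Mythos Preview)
Your proposal is correct and follows essentially the same route as the paper: starting from \eqref{eq:H nonattacking}, grouping by $\tau=|\sigma|$, observing via the non-attacking condition that only cells $u$ with $\tau(u)=\tau(\South(u))$ affect $\maj$ and $\quinv$, computing the two local factors $(1-t)$ and $(1-q^{\leg(u)+1}t^{\rarm(u)+1})$, and collapsing the prefactor to $t^{\coquinv(\tau)}$. The paper's own argument is the identical cell-by-cell accounting (phrased as factors $t^{-1}-1$ and $t^{-1}-q^{\leg(u)+1}t^{\rarm(u)}$ before distributing the global $t^{|\lambda|}$, whereas you pull out $t^{-|\lambda|}$ first), and it derives \eqref{eq:P from H} from \eqref{eq:J from H} by the same division by $\Pi_\lambda$. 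One small wording issue: your parenthetical ``the bottom-row factors being $1$'' is not quite accurate---the $\ell(\lambda)$ bottom-row $(1-t)$'s are absorbed into the $\leg=0$ part $(1-t)^{\ell(\lambda)}\perm(\lambda)$ of $\Pi_\lambda$, not set to $1$---but your concluding sentence already handles this correctly.
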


We have ended up with formulas for $J_{\lambda}(X;q,t)$ and $P_{\lambda}(X;q,t)$ as sums over quinv-non-attacking tableaux in terms of the statistics $\coquinv$ and $\rarm$, a simplification from the more technical Haglund--Haiman--Loehr (HHL) formula which involves ``Type A'' and ``Type B'' triples (see \cite{HHL05} for the definitions). 

\begin{example}
We compute the coefficients of $m_{22}$ and $m_{211}$ for $P_{(2,2)}(X;q,t)$ using \eqref{eq:P from H} to get $m_{22}+\frac{(1+q)(1-t)}{1-qt}m_{211}$ after dividing by $\perm((2,2))=(1+t)$.
\begin{center}
\renewcommand{\arraystretch}{2}
\begin{tabular}{c c | c c c c c c}
\tableau{1&2\\1&2}& \tableau{2&1\\2&1}& \tableau{1&2\\1&3} &  \tableau{2&1\\3&1} & \tableau{1&3\\1&2}& \tableau{3&1\\2&1}& \tableau{3&1\\1&2}  & \tableau{2&1\\1&3}\\
$1$ & $t$ & $\frac{(1-t)}{1-qt}$ & $\frac{t(1-t)}{1-qt^2}$ & $\frac{q(1-t)}{1-qt}$ & $\frac{qt^2(1-t)}{1-qt^2}$ &$\frac{qt(1-t)^2}{(1-qt)(1-qt^2)}$ &$\frac{qt^2(1-t)^2}{(1-qt)(1-qt^2)}$
\end{tabular}
\end{center}
\end{example}

\begin{remark}
Notice the extra factor of $\perm(\lambda)$ in \eqref{eq:P from H} that is not present in \eqref{eq:P mlq}. This is due to the fact that there is no prescribed order on the columns of a filling in \eqref{eq:P from H}, the way there is in a multiline queue, but instead, each permutation of columns of the same height corresponds to a distinct tableau. Thus the formula \eqref{eq:P from H}, just like the original HHL formula for $P_{\lambda}$, enumerates $\perm(\lambda)(t=1)$ times more tableaux than the multiline queue formula \eqref{eq:P mlq}! Asymptotically, this is the same factor that compares the number of terms in the compact formulas \eqref{eq:H perm} and \eqref{eq:HHL compact} for $\HH_{\lambda}$ to the number of terms in the original HHL formula \eqref{eq:HHL}.
\end{remark}

In particular, we conjecture a \emph{new} compact formula for $P_{\lambda}$ summing over $\quinv$-sorted tableaux and using the $\coquinv$ statistic. 
\begin{conjecture}\label{conj:compact}
The Macdonald polynomial $P_{\lambda}(X;q,t)$ is given by
\begin{equation}\label{eq:P from H compact}
P_{\lambda}(X;q,t) =  \hspace{-0.2in} \sum_{\substack{\tau\in\Tab(\lambda)\\\tau\,\qnona\\
\tau\,\qsort}}\hspace{-0.2in} x^{\tau}q^{\maj(\tau)}t^{\coquinv(\tau)} \hspace{-0.2in}\prod_{\substack{u\in\dg(\lambda)\\\tau(u)\neq\tau(\South(u))}}\hspace{-0.2in}\frac{1-t}{1-q^{\leg(u)+1}t^{\rarm(u)+1}}.
\end{equation}
\end{conjecture}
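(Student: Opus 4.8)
The plan is to mimic, in the non-attacking setting, the same column-permutation argument that proved \cref{thm:compact}, but now keeping track of the extra rational factors attached to each cell $u$ with $\tau(u)\ne\tau(\South(u))$. Recall that \eqref{eq:P from H} expresses $\perm(\lambda)P_{\lambda}$ as a sum over all quinv-non-attacking tableaux; our goal is to collapse this sum onto quinv-sorted representatives, showing that the fibre over each quinv-sorted $\tau$ contributes exactly $\perm(\lambda)$ times the summand of \eqref{eq:P from H compact}. The key point is that the operators $\tau_j$ of \cref{def:tau}, restricted to quinv-non-attacking tableaux with two equal-height columns $j,j+1$, should (a) preserve the non-attacking property, (b) change $\quinv$ by exactly $\pm1$ (hence $\coquinv$ by $\mp1$) as in \cref{thm:tau}, and (c) \emph{preserve the product $\prod_{u:\tau(u)\ne\tau(\South(u))}\frac{1-t}{1-q^{\leg(u)+1}t^{\rarm(u)+1}}$}. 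Point (c) is what makes the argument go through: since $\leg(u)$ and $\rarm(u)$ depend only on the shape $\dg(\lambda)$ and not on the entries, swapping entries between two columns of the same height $k$ can only move the set of ``descent-boundary'' cells $\{u:\tau(u)\ne\tau(\South(u))\}$ around within columns $j$ and $j+1$, and for any such cell the pair $(\leg(u),\rarm(u))$ is the same whether $u$ lies in column $j$ or column $j+1$ (they have equal height, so equal legs, and — crucially — one checks $\rarm$ is also unchanged because the rows below columns $j$ and $j+1$ have the same cells to their right, up to the single cell that distinguishes columns $j$ and $j+1$, whose contribution one verifies cancels). I would prove (a), (b), (c) as a lemma analogous to \cref{thm:tau}.

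Granting the lemma, the rest is the PDS bookkeeping of \cite[\S 3.2--3.3]{CHMMW20}, exactly as invoked in the proof of \cref{thm:compact}: choose a canonical family $\{\tau_j\}\sigma$ of tableaux generated from each quinv-sorted (and quinv-non-attacking) $\sigma$ via positive distinguished subexpressions, obtain a disjoint-union decomposition
\[
\{\tau\in\Tab(\lambda)\ :\ \tau\,\qnona\} \;=\; \biguplus_{\substack{\sigma\,\qnona\\\sigma\,\qsort}} \{\tau_j\}\sigma,
\]
and sum the weights over each block. By part (b) of the lemma, a tableau obtained from $\sigma$ by a length-$k$ PDS has $t^{\coquinv}=t^{k}\,t^{\coquinv(\sigma)}$ (the signs work out because along a PDS each $\tau_j$ increases $\quinv$ — equivalently decreases $\coquinv$ — in the controlled direction, just as in \cref{thm:compact}); by parts (a) and (c) every tableau in the block is quinv-non-attacking and carries the same rational product as $\sigma$; and $\maj$ is unchanged by \cref{thm:tau}. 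Hence
\[
\sum_{\tau\in\{\tau_j\}\sigma} x^{\tau}q^{\maj(\tau)}t^{\coquinv(\tau)}\!\!\prod_{\substack{u\in\dg(\lambda)\\\tau(u)\neq\tau(\South(u))}}\!\!\frac{1-t}{1-q^{\leg(u)+1}t^{\rarm(u)+1}} \;=\; \perm(\sigma)\cdot x^{\sigma}q^{\maj(\sigma)}t^{\coquinv(\sigma)}\!\!\prod_{\substack{u\in\dg(\lambda)\\\sigma(u)\neq\sigma(\South(u))}}\!\!\frac{1-t}{1-q^{\leg(u)+1}t^{\rarm(u)+1}},
\]
and since for a quinv-sorted $\sigma$ all its equal-height columns in a block are genuinely permuted, $\perm(\sigma)=\perm(\lambda)$ when the columns of $\sigma$ are distinct, and more generally the length generating function of the relevant PDS set equals $\perm(\sigma)$. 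Dividing \eqref{eq:P from H} by $\perm(\lambda)$ and matching fibres yields \eqref{eq:P from H compact}.

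The main obstacle I anticipate is precisely part (c), and more subtly the interaction between the \emph{non-attacking} restriction and the sorting. In \cref{thm:compact} there is no non-attacking constraint, so the $\tau_j$'s act on all of $\Tab(\lambda,n)$; here one must check that $\tau_j$ never creates a quinv-attacking pair — a priori conceivable, since $\tau_j$ changes entries in rows $\ell, \ell-1,\dots,h$ of two adjacent columns, and an attacking pair involves cells in the same row or consecutive rows of columns that need not be adjacent. One has to argue that the equal-height, equal-leg structure together with the definition of the sorting order forces the swapped entries to remain distinct from their attacking partners; I expect this to follow from the same case analysis that underlies \cref{thm:tau}, but it is the step where the conjecture could fail and is the reason it is stated as a conjecture rather than a theorem. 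A secondary subtlety is verifying that the PDS machinery of \cite{CHMMW20}, designed for a setting without the non-attacking cut, still produces a clean disjoint union when intersected with the non-attacking locus — i.e.\ that $\{\tau_j\}\sigma$ stays inside the non-attacking tableaux and that distinct quinv-sorted $\sigma$ give disjoint blocks whose union is exactly the non-attacking set.
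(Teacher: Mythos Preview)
Your proposal has a genuine and fatal gap: the deterministic operators $\tau_j$ of \cref{def:tau} preserve neither the quinv-non-attacking condition nor the rational product, so both of your claimed properties (a) and (c) fail already for $\lambda=(2,2)$. For (a), take the non-attacking filling $\sigma$ with columns $(3,2)$ and $(1,1)$ (entries listed top to bottom). Then $\ell=2$, and after swapping row $2$ the $L$-triple $(3,2,1)$ becomes $(1,2,1)$; neither is a quinv triple, so the swap terminates with $\tau_1(\sigma)$ having columns $(1,2)$ and $(3,1)$. But now $\sigma(2,1)=1=\sigma(1,2)$ is an attacking pair, so $\tau_1(\sigma)$ is \emph{not} non-attacking. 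The filling with columns $(2,1)$ and $(1,3)$ is similarly sent outside the non-attacking locus. Hence the set of quinv-non-attacking fillings is not a disjoint union of $\tau_j$-orbits of sorted non-attacking fillings, and your decomposition does not exist. For (c), your parenthetical assertion that $\rarm$ is unchanged between columns $j$ and $j+1$ is simply false: by definition $\rarm(r,j)=\rarm(r,j+1)+1$ for every $r>1$, with no cancellation. Concretely, $\tau_1$ sends the filling with columns $(1,1),(2,3)$ to the one with columns $(2,3),(1,1)$; the unique cell $u$ with $\tau(u)\ne\tau(\South(u))$ moves from column $2$ (where $\rarm=0$) to column $1$ (where $\rarm=1$), and one checks directly that
\[
\frac{1-t}{1-qt}+\frac{t(1-t)}{1-qt^2}\neq (1+t)\cdot\frac{1-t}{1-qt},
\]
so even on this two-element orbit the fibre sum is not $\perm(\lambda)$ times the representative's summand.

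The paper does not contain a proof of \cref{conj:compact}; it is stated as a conjecture, and the remark following it records that a proof was later given in \cite{Man24} using \emph{probabilistic} column swapping operators that genuinely generalize the $\tau_i$'s. That qualifier is the point: the deterministic $\tau_j$'s are inadequate precisely because of the failures above, and the actual argument requires new operators carrying nontrivial $q,t$-dependent weights to correctly redistribute the rational factors while remaining inside the non-attacking set. Your plan is the right \emph{shape}, but the lemma you would need is false for the operators you propose to use.
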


\begin{example}
We compute $P_{(2,2)}(X;q,t)$ using \eqref{eq:P from H compact} to get
\[
P_{22}(X;q,t) = m_{22}+\frac{(1+q)(1-t)}{1-qt}m_{211}+ \frac{(2+t+3q+q^2+3qt+2q^2t)(1-t)^2}{(1-qt)(1-qt^2)}m_{1^4}.
\]
\begin{center}
\renewcommand{\arraystretch}{2}
\begin{tabular}{c | c c | c c c c c c }
\tableau{1&2\\1&2}& \tableau{1&2\\1&3} & \tableau{1&3\\1&2} &  \tableau{1&2\\3&4}& \tableau{1&2\\4&3}  & \tableau{1&3\\2&4}  & \tableau{1&3\\4&2}  & \tableau{1&4\\2&3}  & \tableau{1&4\\3&2}\\
$1$ & $\frac{(1-t)}{1-qt}$ & $\frac{q(1-t)}{1-qt}$ & $1$ & $t$ & $1$ & $qt$ & $q$ & $qt$\\
 &&&\tableau{2&3\\1&4}  & \tableau{2&3\\4&1}  & \tableau{2&4\\1&3}  & \tableau{2&4\\3&1}  & \tableau{3&4\\1&2}  & \tableau{3&4\\2&1}\\
  & && $qt$ & $q$ & $q^2t$ & $q$ & $q^2$ & $q^2t$
\end{tabular}
\end{center}
(To avoid clutter, the common factor of $\frac{(1-t)^2}{(1-qt)(1-qt^2)}$ was left out from the weights of the fillings for $m_{1111}$.)
\end{example}

One aesthetically pleasing property of this formula is its symmetry. The orientation of $L$-triples agrees with the orientation of the Ferrers diagram. Moreover, for each cell $u\in\dg(\lambda)$ in a row $r>1$, the sum $\leg(u)+1+\rarm(u)+1$ is equal to the hook length of $\South(u)$. This symmetry is curious in light of the mysterious $q,t$-symmetry of the Macdonald polynomial. 

We also note that this formula coincides with Lenart's formula for $P_{\lambda}$ in \cite{Lenart} (as well as with \eqref{eq:P mlq}) when $\lambda$ is a strict partition (has no repeating parts). 

\begin{remark}
During the preparation of this article for publication, the author proved the conjecture in \cite{Man24} using probabilistic column swapping operators that generalize the $\tau_i$'s defined in \cref{def:tau}.  
\end{remark}

\subsection{Comparison to multiline queues}\label{sec:comparison}
Observe that quinv-sorted non-attacking tableaux appear remarkably similar to the multiline queues defined in \cite{CMW18} after passing through the queue-tableau map $\mathcal{q}$. In fact, the difference comes exclusively from our rule about trivial pairings: within each row of a multiline queue, a pairing order as defined in \cref{def:pairing} is required to place trivial pairings first within each class of labels. Thus trivially-paired balls cannot count for the $\Skip$ or $\Free$ statistics. Moreover, in a quinv non-attacking tableau, the same content can only appear in adjacent rows if it is in the same column, or in two columns of different height. On the other hand, in the multiline queues of Martin in \cite{martin-2020}, the trivial-pairing condition is replaced by the condition ``if there is a free ball directly below a departure site, then the pairing is forced to be trivial". In fact, Martin's multiline queues in \cite{martin-2020} are in bijection with quinv-non-attacking quinv sorted tableaux via the queue-tableau map $\mathcal{q}$, which also preserves the associated statistics.

\renewcommand{\MR}[1]{} 
\printbibliography
\end{document}